\def \RN {\mathbb{R}^N}
\def \G {\mathbb{G}}
\def \R {\mathbb{R}}
\def \de {\partial}
\def \LL {\mathcal{L}}
\def \LieG {\mathrm{Lie}(\mathbb{G})}
\theoremstyle{definition}
\newtheorem{definition}{Definition}[section]
\newtheorem*{axiom}{Assumptions}
\newtheorem{example}[definition]{Example}
\newtheorem{remark}[definition]{Remark}
\theoremstyle{plain}
\newtheorem{theorem}[definition]{Theorem}
\newtheorem{proposition}[definition]{Proposition}
\newtheorem{lemma}[definition]{Lemma}
\numberwithin{equation}{section}
\begin{document}

 \title[A Liouville property for H\"ormander operators] {A Liouville-type property for
degenerate-elliptic equations modeled on H\"ormander vector fields}
 \author[S.\,Biagi]{Stefano Biagi}
 \author[D.D.\,Monticelli]{Dario Daniele Monticelli}
 \author[F.\,Punzo]{Fabio Punzo}

 \address[S.\,Biagi]{Dipartimento di Matematica
 \newline\indent Politecnico di Milano \newline\indent
 Via Bonardi 9, 20133 Milano, Italy}
 \email{stefano.biagi@polimi.it}

 \address[D.D.\,Monticelli]{Dipartimento di Matematica
 \newline\indent Politecnico di Milano \newline\indent
 Via Bonardi 9, 20133 Milano, Italy}
 \email{dario.monticelli@polimi.it}

  \address[F.\,Punzo]{Dipartimento di Matematica
 \newline\indent Politecnico di Milano \newline\indent
 Via Bonardi 9, 20133 Milano, Italy}
 \email{fabio.punzo@polimi.it}

\subjclass[2010]{35J70, 35B53, 35A02, 35H10}

\keywords{H\"ormander operators, Liouville theorem, nonuniqueness of solutions, hypoelliptic operators}

\thanks{All authors are member of the ``Gruppo Nazionale per l'Analisi Ma\-te\-ma\-tica, la Probabilit\`a e le loro 
Applicazioni'' (GNAMPA) of the ``Istituto Nazionale di Alta Matematica'' (INdAM, Italy).
The first author is partially supported by the PRIN 2022 project 2022R537CS \emph{$NO^3$ - Nodal Optimization, 
NOnlinear elliptic equations, NOnlocal geometric problems, with a focus on regularity}, founded by the European 
Union - Next Generation EU. The second and the third authors are 
partially supported by the
PRIN projects 2022 \emph{Geometric-analytic methods for PDEs
and applications}, ref.\,2022SLTHCE, financially supported by the EU, in the
framework of the ``Next Generation EU initiative''.}
 \begin{abstract}
We obtain Liouville type theorems for degenerate elliptic equation with a drift term and a potential. The diffusion is driven by H\"ormander operators. We show that the conditions imposed on the coefficients of the operator are optimal. Indeed, when they fail we prove that infinitely many bounded solutions exist. 
  \end{abstract}
 \maketitle
\section{Introduction}\label{sec.Intro}
The study of degenerate-elliptic Schrödinger-type equations has gained considerable attention in recent years due to their intricate structural properties and their connections to various fields of analysis and geometry. In this paper, we focus on the equation
\begin{equation}\tag{{\it $E$}} \label{eq:MainPDE}
\LL u = \sum_{i = 1}^m X_i^2 u +
\sum_{i = 1}^m b_i(x)X_i u - Q(x)u = 0 \quad \text{in $\R^n$},
\end{equation}
where $X = \{X_1,\ldots,X_m\}$ is a family of smooth vector fields satisfying the Hörmander rank condition,
\emph{which are not necessarily
left\,-\,invariant with respect to any Lie\,-\,gro\-up structure on $\R^n$}. 
The coefficients $b_i(x)$ and the potential function $Q(x)$ are assumed to satisfy suitable regularity and positivity conditions.

Our main objective is to establish optimal conditions on the vector fields and the potential $Q$ that ensure the validity of the Liouville property for solutions of \eqref{eq:MainPDE}. More precisely, we aim to determine when every bounded, smooth solution of \eqref{eq:MainPDE} must necessarily be trivial, i.e.,
\begin{equation}\tag{{\it $LP$}} \label{eq:LPprop}
\text{if $u\in C^\infty(\R^n)$ is a bounded solution of \eqref{eq:MainPDE}, then $u \equiv 0$.}
\end{equation}
As it frequently happens when \emph{global properties} are involved,
the question of the Liouville property for degenerate operators has been mainly studied
in the context of Lie groups, under the assumption $Q \equiv 0$, see, e.g., 
\cite{BonfKog, Kog, KPP, KL}. The case of (not necessarily invariant) H\"ormander operators, like
the ones considered here, is not as well studied; in this direction,
we mention e.g., the papers \cite{KoLa, LaKo}, where the Authors consider again the case $Q\equiv 0$.
We stress that, in all the aforementioned papers,
 the Authors employ techniques different from those considered here. 
\medskip
 
 For nonzero potential functions, a Liouville theorem for the equation  on a complete, noncompact Riemannian manifold was obtained in \cite{Grig1}. Inspired by this approach, the work in \cite{BP1} considered the equation
\[\Delta u + \langle b(x), \nabla u \rangle - Q(x) u  = 0 \]
in a bounded, connected open set $\Omega \subseteq \R^n$. This result was further generalized in \cite{BP2} to the setting of divergence-form operators:
\begin{equation*}
\mathrm{div}\big\{A(x)\nabla u\big\} - Q(x)u = 0 \quad \text{in $\Omega$},
\end{equation*}
where $A(x)$ is a symmetric, uniformly elliptic matrix with $C^2$ entries, and $Q(x)$ is a non-negative, locally Hölder-continuous function.

In this paper, we extend the results of \cite{Grig1, BP1, BP2} to the degenerate-elliptic setting of \eqref{eq:MainPDE}, where the operators involved exhibit a highly non-Euclidean structure. This generalization presents new challenges, particularly due to the presence of the vector fields $X_i$ satisfying the Hörmander condition, which induce a sub-Riemannian geometry. We develop novel techniques tailored to this setting, addressing the fundamental difficulties arising from the degeneracy of the operator and the interplay between the drift terms and the potential function.

 \subsection{Statement of the main result}
  We now turn to state and describe our main result.
 To this end, we
 fix here the main assumptions we require on the vector fields
 $X_1,\ldots,X_m$, and we refer to Section \ref{sec:Preli} for some related comments.
 \vspace*{0.1cm}

Throughout what follows, we set
 $$\LL_0u = \sum_{i = 1}^mX_j^2u+\sum_{i = 1}^m b_i(x)X_iu,$$
 and we refer to the vector field $Y = \sum_{i = 1}^m b_i(x)X_i$ as the \emph{drift of $\LL$}.
 \vspace*{0.1cm}

 \noindent\textbf{Notation.} In what follows, we will exploit the
 notation listed below.
 \vspace*{0.1cm}

 \begin{compactenum}[i)]
 \item $\mathcal{X}(\R^n)$ denotes the \emph{Lie algebra of the smooth
 vector fields} in $\R^n$.
 \vspace*{0.05cm}

 \item If $A\subseteq\mathcal{X}(\R^n)$, $\mathrm{Lie}(A)$ denotes the
 \emph{smallest Lie sub-algebra of $\mathcal{X}(\R^n)$};
 \vspace*{0.05cm}

 \item If $X = \sum_{i = 1}^na_i(x)\de_{x_i}\in\mathcal{X}(\R^n)$ and $x\in\R^n$ is fixed, we set
 $$X_x = \begin{pmatrix}
  a_1(x)\\
  \vdots\\
   a_n(x)
   \end{pmatrix}\in\R^n;$$
 \item Given any $X\in\mathcal{X}(\R^n)$, we denote by $X^*$
 the formal adjoint of $X$ (with respect
 to the usual scalar product in $L^2(\R^n)$), that is,
 \begin{equation} \label{eq:defXadjoint}
  u\mapsto X^*u = -Xu-\mathrm{div}(X_x)u.
 \end{equation}
 By definition, $X^*$ is the unique first-order operator such that
 \begin{equation*}
   \int_{\R^n}u\,Xv\,dx = \int_{\R^n}v\,X^*u\,dx,
 \end{equation*}
for every $u,v\in\mathrm{Lip}_{\mathrm{loc}}(\R^n)$, provided that at least one of the two functions has
compact sup\-port.
Note that $X^*$ is not necessarily a vector field, unless
$$\mathrm{div}(X_x) \equiv 0.$$

 \item If $X = \{X_1,\ldots,X_m\}\subseteq\mathcal{X}(\R^n)$ (with $m\geq 2$), we set
 \begin{equation} \label{eq:nablaXdivX}
 \begin{split}
  \ast)\,\,&\nabla_Xf = (X_1f,\ldots,X_mf)\quad\forall\,\,f\in C^\infty(\R^n); \\
  \ast)\,\,&\mathrm{div}_X(F) = \textstyle\sum_{i = 1}^m X_iF_i\quad\forall\,\,F = (F_1,\ldots,F_m)
  \in C^\infty(\R^n;\R^m).
 \end{split}
 \end{equation}
 We explicitly point out that, as in the classical case when $X_i = \de_{x_i}$,
 the above differential operators are related by the following formulas
 \begin{equation} \label{eq:relationdivnablaX}
 \begin{split}
 \mathrm{i)}\,\,&\textstyle\mathrm{div}_X(\nabla_X u) = \sum_{i = 1}^m X_i^2u, \\
 \mathrm{ii)}\,\,&\mathrm{div}_X(uF) =
 \langle\nabla_X u,F\rangle+u\,\mathrm{div}_X(F),
 \end{split}
 \end{equation}
 holding true for every $u\in C^\infty(\R^n)$ and every $F = (F_1,\ldots,F_m)\in C^\infty(\R^n;\R^m)$.
 Furthermore, owing to \eqref{eq:nablaXdivX} we can re-write $\LL_0$ as
 $$\LL_0 u = \mathrm{div}_X(\nabla_X u)+\langle\mathbf{b}(x),\nabla_X u\rangle,$$
 where $\mathbf{b}$ is the vector-valued function $\mathbf{b} = (b_1,\ldots,b_m)$.
 \end{compactenum}
 \begin{axiom} We assume that $X = \{X_1,\ldots,X_m\}$ (with $m\geq 2$)
 is a family of \emph{smooth vector fields in $\R^n$} (that is,
 $X_1,\ldots,X_m\in\mathcal{X}(\R^n)$) such that
 \vspace*{0.05cm}

 \begin{compactenum}
 \item[$(H1)$] $X_1,\ldots,X_m$ are homogeneous of degree $1$
 with respect to a family of \emph{non-iso\-tropic dilations}
 $\{\delta_\lambda\}_{\lambda}$ in $\R^n$ of the following form
 \begin{equation*} 
\delta_{\lambda}(x):=(\lambda^{\sigma_{1}}x_{1}%
,\ldots,\lambda^{\sigma_{n}}x_{n}),
\end{equation*}
where $\sigma = (\sigma_{1},\ldots,\sigma_{n})\in\mathbb{N}^n$ and
$1=\sigma_{1}\leq\ldots\leq\sigma_{n}$.
 This means, precisely, that for every $f\in C^\infty(\R^n),\,\lambda > 0$ one has
 $$X_i(f\circ\delta_\lambda) = \lambda (X_if)\circ\delta_\lambda
 \quad (i = 1,\ldots,m).$$
  \item[$(H2)$] $X_1,\ldots,X_m$ satisfy the \emph{H\"ormander rank condition at $x=0$}, that is,
 \begin{equation*}
  \mathrm{dim}\big\{Z_0:\,Z\in\mathrm{Lie}(X)\big\}  =n,
 \end{equation*}
 \end{compactenum}
 \vspace*{0.1cm}

 \end{axiom}
 Let $\mathcal{N}:\R^n\rightarrow[0,+\infty)$ be an exhaustion function with compact sublevels, which is $C^1$ outside some compact set $K\subset\R^n$.
 Given any $r > 0$ large enough, we set
 \begin{equation} \label{eq:defFactorS}
 \mathcal{S}(r) = \int_{\{\mathcal{N}(x) = r\}}
 \frac{|\nabla_X\mathcal{N}|^2}{|\nabla\mathcal{N}|}\,d\mathcal{H}^{n-1}.
 \end{equation}
 Note that $$\nabla_X\mathcal{N}(x)=S(x)^T\nabla\mathcal{N}(x)$$ at every point $x\in\R^n\setminus K$, for a suitable $n\times m$ matrix $S(x)$, see \eqref{1}. Therefore we have 
 $$0\leq \frac{|\nabla_X\mathcal{N}(x)|^2}{|\nabla\mathcal{N}(x)|}\leq |S(x)|^2|\nabla\mathcal{N}(x)|$$ and thus with an abuse of notation we agree to set $$\frac{|\nabla_X\mathcal{N}|^2}{|\nabla\mathcal{N}|}=0$$ at all points where $|\nabla\mathcal{N}|=0$.  We can now state our main result.
 \begin{theorem} \label{thm:MainThm}
  Let $X = \{X_1,\ldots,X_m\}\subseteq\mathcal{X}(\R^n)$ satisfy assumptions $(H1)$\,-\,$(H2)$.
  Moreover, let $\mathbf{b}\in C^\infty(\R^n,\R^m)$, $Q\in C^\infty(\R^n)$ and let $\mathcal{N}:\R^n\rightarrow[0,+\infty)$ be an exhaustion function with compact sublevels which is $C^1$ outside some compact set $K\subset\R^n$.
  We suppose that
  \vspace*{0.05cm}

 \begin{compactenum}
 \item[$(S)$] $Q\geq 0$ and $Q\not\equiv0$ pointwise in $\R^n$;
 \vspace*{0.1cm}

 \item[$(G)$] there exist constants
 $\kappa,\,\rho_0 > 0$ and a function $\hat{q}:(\rho_0,+\infty)\to[0,+\infty)$,
 \emph{co\-ntinuous and non identically $0$ on $\mathcal{I}_0 = (\rho_0, +\infty)$}, such that
 \vspace*{0.1cm}

 \noindent \emph{i)}\,\,for every $x\in\R^n$ with $\mathcal{N}(x) > \rho_0$, we have
 \begin{align}
   & Q(x)\geq |\nabla_X \mathcal{N}(x)|^2\hat{q}(\mathcal{N}(x)); \label{eq:assQgeq}\\
   & |\mathbf{b}(x)|^2+(\mathrm{div}_X(\mathbf{b}))_-
    \leq \kappa\,|\nabla_X \mathcal{N}(x)|^2\Big(\int_{\rho_0}^{\mathcal{N}(x)}
    \!\!\!\sqrt{\hat{q}(t)}\,dt\Big)^2\hat{q}(\mathcal{N}(x));  \label{eq:controllobLontano}
  \end{align}

  \noindent\emph{ii)}\,\,for every $x\in\R^n$ with $\mathcal{N}(x) \leq \rho_0$, we have
 \begin{equation} \label{eq:controllobVicino}
   |\mathbf{b}(x)|^2+(\mathrm{div}_X(\mathbf{b}))_-
   \leq \kappa Q(x).
  \end{equation}
 \end{compactenum}
 Here, $\mathbf{b} = (b_1,\ldots,b_m)$ and
  $(\mathrm{div}_X(\mathbf{b}))_-$ is the negative part of $\mathrm{div}_X(\mathbf{b})$
  \emph{(}as defined in the above notation \eqref{eq:nablaXdivX}\emph{)}, that is,
  $\mathrm{div}_X(\mathbf{b})_- = \max\{-\mathrm{div}_X(\mathbf{b}),0\}.$
 \vspace*{0.05cm}

  Then, if there exists a constant $\Lambda > 0$ such that
 \begin{equation} \label{eq:mainAssumptionIntegral}
   \int_{\rho_0}^{+\infty}\frac{1}{\mathcal{S}(r)}\exp\Big\{\Lambda
   \Big(\int_{\rho_0}^r\sqrt{\hat{q}(s)}\,ds\Big)^2\Big\}\,dr = +\infty,
 \end{equation}
 the Liouville property \eqref{eq:LPprop} holds for equation \eqref{eq:MainPDE}.
 \end{theorem}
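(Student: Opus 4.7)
My plan is a Grigor'yan-type test-function argument, adapted to the sub-Riemannian setting with drift. Let $u$ be a bounded smooth solution of \eqref{eq:MainPDE} with $|u|\leq M$. First, I would test $\mathcal{L}u = 0$ against $u\phi^2$ for a compactly supported Lipschitz cutoff $\phi$, rewrite $\mathcal{L}_0 u = \mathrm{div}_X(\nabla_X u)+\langle\mathbf{b},\nabla_X u\rangle$ via \eqref{eq:relationdivnablaX}, and integrate by parts using \eqref{eq:defXadjoint}; the drift term is handled by the symmetrization $2u\langle\mathbf{b},\nabla_X u\rangle = \langle\mathbf{b},\nabla_X u^2\rangle$ followed by a second integration by parts. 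After standard Young's inequalities with a small parameter $\mu>0$ to absorb the cross terms involving $\nabla_X u$, this yields the Caccioppoli-type estimate
\begin{equation*}
\tfrac{1}{2}\!\int\!\phi^2|\nabla_X u|^2+\int\! Q u^2\phi^2 \leq C_\mu\!\int\! u^2|\nabla_X\phi|^2 + C\!\int\! u^2\phi^2\bigl(|\mathbf{b}|^2+(\mathrm{div}_X\mathbf{b})_-\bigr).
\end{equation*}

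The central step is a careful choice of cutoff. Set $A(r):=\bigl(\int_{\rho_0}^r\!\sqrt{\hat q(s)}\,ds\bigr)^{\!2}$, and take $\phi(x) = \psi(\mathcal{N}(x))\exp\{-\tfrac{\Lambda}{2}A(\mathcal{N}(x))\}$ for a smooth radial cutoff $\psi$, with $\Lambda$ the exponent of \eqref{eq:mainAssumptionIntegral} (or a suitable multiple thereof). On $\{\mathcal{N}\leq\rho_0\}$, where $A\equiv 0$, \eqref{eq:controllobVicino} absorbs the drift integral into $\int Qu^2\phi^2$ by choosing $\mu$ small in terms of $\kappa$. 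On $\{\mathcal{N}>\rho_0\}$, \eqref{eq:controllobLontano} combined with \eqref{eq:assQgeq} bounds the drift integrand by $\kappa Q A$; simultaneously, expanding $|\nabla_X\phi|^2$ produces a term of the same form $\Lambda^2 QA$ coming from differentiating $e^{-\Lambda A/2}$ and using $(A')^2=4\hat q A$. Balancing the three parameters $(\mu,\Lambda,\kappa)$ through a careful sequence of Young inequalities---exploiting the elementary pointwise bound $Ae^{-\Lambda A}\leq (e\Lambda)^{-1}$, which trades the unbounded factor $A$ for a halving of the exponential decay rate---one ultimately obtains an inequality of the form
\begin{equation*}
\int Q u^2\psi(\mathcal{N})^2 e^{-\Lambda A(\mathcal{N})}\,dx \leq CM^2\!\int|\psi'(\mathcal{N})|^2\, e^{-\Lambda A(\mathcal{N})}\,|\nabla_X\mathcal{N}|^2\,dx.
\end{equation*}

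The passage to the Liouville conclusion is then standard. By the co-area formula and \eqref{eq:defFactorS}, the right-hand side equals $CM^2\int_{\rho_0}^\infty|\psi'(r)|^2 e^{-\Lambda A(r)}\mathcal{S}(r)\,dr$. Hypothesis \eqref{eq:mainAssumptionIntegral} is exactly $\int_{\rho_0}^\infty\!(e^{-\Lambda A}\mathcal{S})^{-1}\,dr = +\infty$, so a Khas'minskii/Cauchy-Schwarz construction furnishes cutoffs $\psi_R\to 1$ pointwise with $\int|\psi_R'|^2 e^{-\Lambda A}\mathcal{S}\,dr\to 0$ as $R\to\infty$. Monotone convergence then gives $\int Qu^2\,dx = 0$, so $u\equiv 0$ on the nonempty open set $\{Q>0\}$; the same estimate drives $\int\phi_R^2|\nabla_X u|^2\to 0$, hence $X_i u\equiv 0$ for every $i$. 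Since (H1) propagates the H\"ormander rank condition from $x=0$ to every point of $\R^n$, Chow-Rashevsky applies and makes $\R^n$ horizontally connected; $u$ is therefore constant, and the constant must be $0$.

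The technical knot, as I see it, is the absorption in Step 2: the simultaneous control of the exterior drift term and of the term produced by differentiating the exponential weight, both of which carry the unbounded factor $A(\mathcal{N})$, is delicate and requires a careful bootstrap/tuning of $(\mu,\Lambda,\kappa)$. This is precisely why the optimal weight in \eqref{eq:mainAssumptionIntegral} takes the exponential form $e^{\Lambda A}$. A secondary subtlety is that, because the $X_i$ are not assumed divergence-free, the formal integrations by parts in Step 1 produce extra $\mathrm{div}(X_{i,x})$ corrections (cf.\ \eqref{eq:defXadjoint}) that must be bookkept together with $\mathrm{div}_X\mathbf{b}$ and handled by the same hypotheses.
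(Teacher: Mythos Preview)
Your approach has a genuine gap precisely at the point you flag as the ``technical knot.'' With $\phi=\psi(\mathcal N)\exp\{-\tfrac{\Lambda}{2}A(\mathcal N)\}$, both the exterior drift contribution and the term produced by differentiating the exponential weight are of size $C\,Qu^2\psi^2\,A\,e^{-\Lambda A}$, whereas the good term on the left is only $Qu^2\psi^2\,e^{-\Lambda A}$: the ratio is the \emph{unbounded} factor $A(\mathcal N(x))$, and no tuning of $(\mu,\Lambda)$ can cure this. The bound $Ae^{-\Lambda A}\le(e\Lambda)^{-1}$ (or its ``half-rate'' variant $Ae^{-\Lambda A}\le\tfrac{2}{e\Lambda}e^{-\Lambda A/2}$) trades $A$ for a weight that is \emph{larger} than $e^{-\Lambda A}$, so after using it the bad term still dominates the good one. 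Even in the driftless case $\mathbf b\equiv 0$, the only radial weight for which the absorption closes is $\exp\{-\lambda\!\int\!\sqrt{\hat q}\}$ with $\lambda$ \emph{small}; but then the co-area step requires $\int e^{2\lambda\sqrt{A}}/\mathcal S\,dr=\infty$ for small $\lambda$, which already fails in the borderline Heisenberg case $\hat q(r)=r^{-2}$, $\mathcal S(r)\sim r^{D-1}$, where \eqref{eq:mainAssumptionIntegral} nevertheless holds. So the purely elliptic Caccioppoli scheme cannot reach the sharp condition \eqref{eq:mainAssumptionIntegral}.

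The paper supplies the missing idea, namely Grigor'yan's parabolic device. After reducing via Proposition~\ref{prop:Solnonneg} to a solution $0<u_0\le 1$, one sets $v(x,t)=e^tu_0(x)-1$, which satisfies $Qv_t=\LL_0v$, and tests (Proposition~\ref{prop:intbyparts}) against a \emph{space-time} weight $e^{\zeta}$ with $\zeta(x,t)=-f_R(\mathcal N(x))^2/(2\tau-t)$ and $f_R'\sim\tfrac14\sqrt{\hat q}$. The extra term $Q\zeta_t=-Qf_R^2/(2\tau-t)^2$ is negative and, through \eqref{eq:assQgeq}--\eqref{eq:controllobVicino}, dominates $|\nabla_X\zeta|^2$ together with the drift errors \emph{pointwise}, provided $\tau$ is chosen small enough (depending only on $\kappa$ and the structural constants). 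Thus the smallness needed for absorption comes from the auxiliary time parameter, not from the data, and one iterates on intervals of fixed length $\tau_0$ to reach all $t>0$; letting $t\to\infty$ then contradicts $u_0>0$ on $\mathrm{supp}(Q)$. Two side remarks: under $(H1)$ one has $\mathrm{div}((X_i)_x)\equiv 0$, so $X_i^*=-X_i$ and the adjoint corrections you worry about do not arise; and the contradiction is obtained directly on $\mathrm{supp}(Q)$, without any appeal to Chow--Rashevsky connectivity.
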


 \begin{remark}
   Note that if in Theorem \ref{thm:MainThm} we let $Q\equiv0$ in $\R^n$, then by condition $(G)$ we immediately derive that it must also be $\mathbf{b}\equiv0$ in $\R^n$. Hence equation \eqref{eq:MainPDE} reduces to $$\sum_{i=1}^mX_i^2u=0,$$ for which the Liouville property is known to hold, see e.g. \cite{KoLa}.
 \end{remark}
Let us present the structure of the present paper. In Section \ref{sec:Preli} 
some basic pro\-per\-ties of the operator $\mathcal L$ are discussed, and some auxiliary results are proved. The main result, Theorem \ref{thm:MainThm}, is shown in Section \ref{proof}. In Section \ref{sec:CarnotGroups} the main result is specialized to the case of Carnot groups, the relevant definitions
being recalled in the Appendix. Then in Section \ref{examples}  we provide some examples
 in which our main result is applied. For the sake of concreteness, these examples will be given in the context
 of \emph{Carnot groups} again. 
Then the optimality of the hypotheses in Theorem \ref{thm:MainThm} will be discussed in Section \ref{sec:Optimality} below.

 \section{Preliminaries} \label{sec:Preli}
 In this first section we collect some preliminary material needed for the proof
 of our main results. To begin with, we  present
 some distinguished properties of $\LL$ which follow
 from assumptions $(H1)$\,-\,$(H2)$; then,
 we establish some results (of independent interest)
 allowing to simplify the proof of Theorem \ref{thm:MainThm}.
 \vspace*{0.1cm}

 To ease the readability
 of the subsequent statements,
 from now on (and throughout the rest of the paper) we tacitly understand that
 \begin{itemize}
  \item[1)] $X =
 \{X_1,\ldots,X_m\}$ is a set of smooth vector fields
 in $\R^n$ satisfying assumptions $(H1)$\,-\,$(H2)$
 (of which we inherit the notation);
 \vspace*{0.05cm}

 \item[2)] $\LL$ is as in \eqref{eq:MainPDE}, with $\mathbf{b}\in C^\infty(\R^n,\R^m)$ and
 $Q\in C^\infty(\R^n),\,Q\geq 0$ in $\R^n$.
 \end{itemize}
 Moreover, if $\mathcal{O}\subseteq\R^n$ is an arbitrary open set and $k \in\mathbb{N}\cup\{\infty\}$, we define
 $$C^k_+(\mathcal{O}) = \{f\in C^k(\mathcal{O}):\,\text{$f\geq 0$ pointwise in $\mathcal{O}$}\}.$$
 \subsection*{Some relevant properties of $\LL$}
 As anticipated, we begin this section
 by listing some distinguished properties of $\LL$
 which follow from assumptions $(H1)$\,-\,$(H2)$; we will repeatedly exploit these properties
 in the sequel.
 \vspace*{0.1cm}

 To begin with we observe that, by combining
 $(H1)$\,-\,$(H2)$, it is not difficult to recognize
	that $X_1,\ldots,X_m$ satisfy H\"ormander's Rank Condition
	not only at the origin, but
	at every point $x\in\R^n$ (see, e.g., \cite[Remark 6]{BBLondon}), that is,
	\begin{equation} \label{eq.Hormanderovunque}
	\mathrm{dim}_\R\big(\big\{Z_x:\,Z\in\mathrm{Lie}(X)\big\}\big)
   = n, \qquad\text{for every $x\in\R^n$};
   \end{equation}
   hence, $\LL$ falls in the class of the so-called \emph{H\"ormander operators}.
   As a consequence of this fact, we infer the following
   key properties.
   \begin{enumerate}[1)]
   \item By the celebrated H\"ormander's Hypoellipticity
   Theorem \cite{Hormander}, the operator
   \emph{$\LL$ is $C^\infty$\--hy\-poelliptic
   in every open subset of $\R^n$}; this means, precisely, that
   for every open set $\Omega\subseteq\R^n$ and every $f\in C^\infty(\Omega)$ we have
   $$\text{$\LL u = f$ in $\mathcal{D}'(\Omega)\,\,\Longrightarrow\,\,u\in C^\infty(\Omega)$}.$$
   In particular, if $u$ is any distributional solution of \eqref{eq:MainPDE}, then
   $u\in C^\infty(\R^n)$.
   \vspace*{0.1cm}

   \item The operator $\LL$ satisfies
   the following Strong Ma\-xi\-mum Principle
   (SMP):
    \emph{as\-su\-me that $\Omega\subseteq\R^n$
   is a
   con\-nec\-ted open set, and let
    $u\in C^2(\Omega)$ be such that
   $\LL u
   \geq 0$ on $\Omega$; if $u$ attains a non-negative maximum in $\Omega$, then $u$ is
   constant in $\Omega$} (see, e.g.,
   \cite[Corollary 3.1]{Bony} for a proof).
   \vspace*{0.1cm}

   \item The operator $\LL$ also satisfies the
   fol\-low\-ing Weak Maximum Principle (WMP):
   \emph{let $\Omega\subseteq\R^n$
   be a bounded open set, and let $u\in C^2(\Omega)\cap C(\overline{\Omega})$; then,}
   $$\begin{cases}
    \LL u\geq 0 & \text{in $\Omega$} \\
    u\leq 0 & \text{on $\de\Omega$}
   \end{cases}\,\,\Longrightarrow\,\,\text{$u\leq 0$
   throughout $\Omega$}
   $$
   (see, e.g., \cite[Proposition 10.16]{BBLibro} for a proof).
   \vspace*{0.1cm}

   \item The operator $\LL$ is \emph{non-totally degenerate} (NTD), that is,
   $$A(x)\neq 0\quad\text{for every $x\in\R^n$},$$
   where $A(x)$ is the \emph{principal matrix of $\LL$}.
   In fact, by direct computation we see that
   the matrix $A(x)$ takes the following explicit expression
   \begin{equation}\label{1}
    \begin{gathered}
     A(x) = S(x)\cdot S(x)^T, \\\text{where
   $S(x) = \big((X_1)_x\cdots (X_m)_x\big)\in M_{n,m}(\R)$}
   \end{gathered}
   \end{equation}
   (since we can write $\LL u = \mathrm{div}(A(x)\nabla u)+Wu-Q(x)u$ for some $W\in\mathcal{X}(\R^n)$);
   on the other hand, since \eqref{eq.Hormanderovunque} holds,
   for every fixed $x\in\R^n$ there exists
   $1\leq k_x\leq m$ such that
   $$(X_{k_x})_x\neq 0,$$
   and thus $A(x) = S(x)\cdot S(x)^T\neq 0$, as claimed.
    \end{enumerate}
 We also point out that,
 since the vector fields $X_1,\ldots,X_m$ satisfy condition $(H1)$, for every $i=1,\ldots,m$ we have
 $\operatorname{div}((X_i)_x)\equiv 0$ (here, $\operatorname{div}$
 denotes the usual divergence operator in $\R^n$),
 see, e.g., \cite[Section 1.5]{BLUlibro}; hence,
 by \eqref{eq:defXadjoint} we have
 \[
 X_i^*=-X_i,
 \]
 and the following integration by parts formula holds
 \begin{equation}\label{eq:intbyparts}
 \int_{\mathbb{R}^n}vX_iu\,dx=-\int_{\mathbb{R}^n}uX_iv\,dx
 \end{equation}
 for every $u,v \in
 \mathrm{Lip}_{\mathrm{loc}}(\mathbb{R}^n)$, if at least one of the two functions has compact support.

\begin{remark} \label{rem:PropertiesValgonoGenerali}
  We explicitly remark that the above properties hold for any H\"{o}rmander operator of the form $$\mathcal{P}=\sum_{i=1}^mX_i^2+Z-c$$ with $Z\in\mathcal{X}(\R^n)$ and $c\in C^\infty(\R^n)$, $c\geq0$.
\end{remark}

 \subsection*{Auxiliary results}
 Now that we have recalled the most relevant properties
 of $\LL$,
 we proceed by establishing some results (of independent interest)
 which will be used (as key ingredients)
 in the proof of Theorems \ref{thm:MainThm}\,-\,\ref{thm:Optimal}.
 \medskip

 The first result we aim to prove is the following
 \begin{proposition} \label{prop:Solnonneg}
   Assume that $u\in C^2(\R^n)$ is a
   (non-trivial) bounded solution of \eqref{eq:MainPDE}.
   Then, there exists a solution $\hat{u}\in C^2(\R^n)$ of \eqref{eq:MainPDE},
   further satisfying
   $$\text{$0<\hat{u}\leq 1$ pointwise in $\R^n$}.$$
 \end{proposition}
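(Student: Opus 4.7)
My plan is to construct $\hat u$ as the monotone decreasing limit of solutions of Dirichlet problems on an exhausting sequence of bounded domains, bootstrapping from the smooth subsolution $w := u^2$. First, by linearity of $\LL$, replacing $u$ with $u/\|u\|_\infty$ I may assume without loss of generality that $|u|\le 1$ pointwise in $\R^n$ (here I use that $u$ is non-trivial and bounded, so $0<\|u\|_\infty<\infty$). Then a direct computation, using $X_i(u^2)=2u\,X_iu$, $X_i^2(u^2)=2(X_iu)^2+2u\,X_i^2u$, and the equation $\LL u=0$, gives
\begin{equation*}
\LL(u^2)=2|\nabla_X u|^2+Q\,u^2\ge 0\qquad\text{in }\R^n,
\end{equation*}
so $w\in C^\infty(\R^n)$ satisfies $0\le w\le 1$, $w\not\equiv 0$, and is a subsolution of $\LL$.

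Next, I fix a family of bounded open sets $\Omega_k$ with $\overline{\Omega_k}\subset\Omega_{k+1}$ and $\bigcup_k\Omega_k=\R^n$ (for instance the sublevel sets $\{\mathcal{N}<k\}$ of the exhaustion function), and consider the Dirichlet problem $\LL v_k=0$ in $\Omega_k$ with $v_k=1$ on $\de\Omega_k$, interpreted in the generalized Perron-Wiener-Brelot sense whenever $\de\Omega_k$ carries characteristic points. Since $\LL(1)=-Q\le 0$, the constant $1$ is a supersolution, and the WMP recalled in Section \ref{sec:Preli} forces $0\le v_k\le 1$. The same WMP applied to $w-v_k$ (which has $\LL(w-v_k)\ge 0$ in $\Omega_k$ and $\le 0$ on $\de\Omega_k$) yields $w\le v_k$; applied to $v_{k+1}-v_k$ in $\Omega_k$ it yields $v_{k+1}\le v_k$. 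Thus $\{v_k(x)\}$ is defined for all large $k$, monotone non-increasing, and squeezed between $w(x)$ and $1$.

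Define $\hat u(x):=\lim_k v_k(x)$. By the $C^\infty$-hypoellipticity of $\LL$ together with interior Schauder-type estimates for H\"ormander operators, the uniform bound $0\le v_k\le 1$ upgrades the pointwise convergence to locally smooth convergence; hence $\hat u\in C^\infty(\R^n)$ and $\LL\hat u=0$ in $\R^n$, with $w\le\hat u\le 1$. In particular $\hat u$ is non-trivial. To rule out $\hat u(x_0)=0$ at any point, I apply the SMP from Section \ref{sec:Preli} to $-\hat u$: one has $\LL(-\hat u)=0\ge 0$, and at such $x_0$ the function $-\hat u\le 0$ would attain its non-negative maximum $0$, so it would be identically constant on the connected set $\R^n$, forcing $\hat u\equiv 0$ and contradicting $\hat u\ge w\not\equiv 0$. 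This yields the desired conclusion $0<\hat u\le 1$ in $\R^n$.

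The main obstacle I anticipate lies in the second step: for general H\"ormander operators, Euclidean or $\mathcal{N}$-level balls may contain characteristic boundary points at which continuous boundary data need not be attained classically, and one cannot naively invoke a Dirichlet existence result. The argument above, however, only requires a classical solution in the interior together with three WMP-based comparisons ($0\le v_k\le 1$, $w\le v_k$, $v_{k+1}\le v_k$), so it is sufficient to work with generalized Perron-Wiener-Brelot solutions, whose existence follows from hypoellipticity, the NTD property and the WMP; boundary continuity at characteristic points plays no role in the monotone-limit argument.
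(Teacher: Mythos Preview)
Your proof is correct and follows a genuinely different route from the paper's. The paper splits into cases according to whether $u$ changes sign; in the sign-changing case it normalizes $\sup u_+=1$, solves $\LL v_k=0$ on the special domains $\mathcal V_k$ of Lemma~\ref{lem:SequenceInvading} with boundary data $u_+$, and obtains an \emph{increasing} sequence $u_+\le v_k\le v_{k+1}\le 1$. You instead avoid the case split entirely via the subsolution $w=u^2$, take constant boundary data $1$ on a generic exhaustion, and obtain a \emph{decreasing} sequence $w\le v_{k+1}\le v_k\le 1$. The $u^2$ trick is clean and unifies the two cases. The paper's choice of the specific domains $\mathcal V_k$ buys classical solvability of the Dirichlet problem up to the boundary (Lemmas~\ref{lem:ExistenceOmegazero}--\ref{lem:SequenceInvading}), so that the WMP applies directly to functions in $C^2(\mathcal V_k)\cap C(\overline{\mathcal V}_k)$; your approach trades this for the Perron--Wiener--Brelot machinery, which is legitimate here by Bony's potential theory for H\"ormander operators but is heavier to cite. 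Two minor remarks: the exhaustion function $\mathcal N$ is not part of the hypotheses of this proposition, so just use Euclidean balls; and the passage to the limit requires no interior Schauder-type estimates---monotone (or dominated) convergence shows $\hat u$ is a distributional solution, and hypoellipticity then gives $\hat u\in C^\infty$, exactly as in the paper's argument.
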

 To prove Proposition \ref{prop:Solnonneg}, we first establish the following lemmas.
 \begin{lemma}\label{lem:ExistenceOmegazero}
  There exists a bounded, open and convex neighborhood $\Omega_1\subseteq\R^n$ of the origin
  with the following property:
  for every $Z\in\mathcal{X}(\R^n)$, every $c\in C_+^\infty(\R^n)$,
  e\-very $f\in C(\overline{\Omega}_1)$ and every $\varphi\in C(\de\Omega_1)$,
  there exists a \emph{unique}
  $u\in C(\overline{\Omega}_1)$ s.t.
  \begin{equation}\label{2}
  \begin{cases}
  \mathcal{P}u = \displaystyle\sum_{i = 1}^mX_i^2u+Zu-c(x)u = f & \text{in $\mathcal{D}'(\Omega_1)$} \\
  u = \varphi & \text{pointwise on $\de\Omega_1$}.
  \end{cases}
  \end{equation}
 \end{lemma}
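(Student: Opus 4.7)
\emph{Uniqueness.} Given two solutions $u_1,u_2\in C(\overline{\Omega}_1)$ of \eqref{2}, the difference $w:=u_1-u_2$ satisfies $\mathcal{P}w=0$ in $\mathcal{D}'(\Omega_1)$ with $w=0$ on $\partial\Omega_1$. By the $C^\infty$-hypoellipticity of $\mathcal{P}$ (property 1) together with Remark \ref{rem:PropertiesValgonoGenerali}) one has $w\in C^\infty(\Omega_1)$; applying the Weak Maximum Principle for $\mathcal{P}$ (property 3) and the same Remark) to both $w$ and $-w$ forces $w\equiv 0$. This argument works for any bounded open $\Omega_1$ and does not depend on the choice of $Z,c,f,\varphi$.

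\emph{Existence.} My plan is to take $\Omega_1$ to be a sufficiently small Euclidean ball $B_{r_0}(0)$, which is automatically bounded, open and convex, and to solve the Dirichlet problem for $\mathcal{P}$ via the Perron--Wiener--Brelot (PWB) scheme. As a preliminary step I reduce to the case $f\equiv 0$: extending $f$ continuously to $\R^n$ with compact support and convolving with a parametrix of $\mathcal{P}$ (available since $\mathcal{P}$ is a H\"ormander operator), one produces $v\in C(\overline{\Omega}_1)\cap C^\infty(\Omega_1)$ with $\mathcal{P}v=f$ in $\Omega_1$. Subtracting $v$, the task reduces to finding $u\in C(\overline{\Omega}_1)\cap C^\infty(\Omega_1)$ solving $\mathcal{P}u=0$ in $\Omega_1$ with continuous boundary datum $\varphi-v$.

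The abstract PWB machinery applies to $\mathcal{P}$ thanks to the four structural properties recalled in Section \ref{sec:Preli} (non-total-degeneracy, $C^\infty$-hypoellipticity, SMP and WMP, all inherited by $\mathcal{P}$ via Remark \ref{rem:PropertiesValgonoGenerali}). It yields a smooth $\mathcal{P}$-harmonic interior solution as the upper/lower PWB envelope; the delicate issue is the continuous attainment of the boundary values, which is equivalent to the \emph{regularity} of every $y_0\in\partial\Omega_1$ in the sense that a local barrier for $\mathcal{P}$ at $y_0$ exists.

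The construction of such barriers, uniformly in the choice of $Z$ and $c$, is what I expect to be the main obstacle. Since the regularity of boundary points for the principal part $\sum_{i=1}^m X_i^2$ depends only on $X$, I would first choose $r_0$ small enough that every $y_0\in\partial B_{r_0}(0)$ is regular for $\sum_{i=1}^m X_i^2$; a natural candidate is a suitable negative power of the $\delta_\lambda$-homogeneous gauge provided by $(H1)$, evaluated at $x-y_0$, whose $\mathcal{P}$-super\-har\-mon\-i\-ci\-ty near $y_0$ can be verified using the H\"ormander condition \eqref{eq.Hormanderovunque}. Because $Z$ and $c$ are bounded on the fixed compact set $\overline{B}_{r_0}(0)$, the lower-order contribution $Zu-c(x)u$ can then be absorbed as a controlled perturbation of the principal part by a slight local modification of the sub-Laplacian barrier, giving a barrier for $\mathcal{P}$ at the same $y_0$ \emph{for every} admissible pair $(Z,c)$. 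The most delicate sub-step is handling the \emph{characteristic} boundary points, where $\partial B_{r_0}(0)$ is tangent to the span of the $X_i$'s; here the 1-homogeneity furnished by $(H1)$ together with the non-total-degeneracy of $\mathcal{P}$ (property 4) is what ensures the strict sign needed in the barrier computation.
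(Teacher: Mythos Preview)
Your uniqueness argument is fine, and the overall Perron strategy is reasonable, but the existence part has a genuine gap at exactly the point you flag as ``most delicate'': the characteristic boundary points of the Euclidean ball $B_{r_0}(0)$. Shrinking $r_0$ does \emph{not} eliminate them (think of the Heisenberg group: any Euclidean ball centered at $0$ has characteristic poles on the $t$-axis, for every radius), so your plan to ``choose $r_0$ small enough that every $y_0\in\partial B_{r_0}(0)$ is regular for $\sum X_i^2$'' cannot succeed by a smallness argument alone. Moreover, the barrier candidate you propose, a negative power of a $\delta_\lambda$-homogeneous gauge evaluated at $x-y_0$, does not have any controlled interaction with the $X_i$'s: assumptions $(H1)$--$(H2)$ provide dilations centered at the origin but \emph{no} group translation, so there is no reason for $\sum X_i^2\big(\mathcal{N}(x-y_0)^{-\beta}\big)$ to have a sign near $y_0$. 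Your final sentence invokes non-total-degeneracy, but NTD only says $A(y_0)\neq 0$; at a characteristic point one has $\langle A(y_0)\nu,\nu\rangle=0$ for the outward normal $\nu$, and this is precisely what kills the leading term in any exterior-ball-type barrier computation. The parametrix reduction to $f\equiv 0$ is also not justified in this generality.

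The paper avoids all of this by \emph{choosing the domain so that no boundary point is characteristic}. Using NTD at the origin, one finds a unit vector $\nu_0$ and a neighborhood $\mathcal{V}$ on which $\langle A(x)\nu,\nu\rangle\geq\delta>0$ for every $\nu$ close to $\nu_0$; then $\Omega_1$ is taken to be the lens $B_{M+\eta}(M\nu_0)\cap B_{M+\eta}(-M\nu_0)$ for $M$ large and $\eta$ small. Every boundary point of this lens admits an exterior ball whose inward normal is close to $\pm\nu_0$, hence is non-characteristic, and Bony's theorem (\cite[Th.\,5.2]{Bony}) gives existence and uniqueness directly---this handles the case $c>0$ and arbitrary $Z,f,\varphi$ in one stroke, with $\Omega_1$ depending only on $X$. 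The case $c\geq 0$ is then reduced to $c>0$ by the multiplicative substitution $u=\psi v$ with $\psi(x)=K-e^{-\alpha\langle x,\nu_0\rangle}$: the conjugated operator $\mathcal{Q}v=\psi^{-1}\mathcal{P}(\psi v)$ has the same principal part and a new zero-order coefficient $\widetilde{c}$ which, thanks to $\langle A(x)\nu_0,\nu_0\rangle\geq\delta$, is strictly positive for $\alpha,K$ large. This geometric choice of $\Omega_1$ is the idea your proposal is missing.
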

 \begin{remark}
 As it will be clear for the proof, in Lemma \ref{lem:ExistenceOmegazero}
 it is enough to assume  that the vector field $Z$ and the functions $c$
 are defined (and smooth) on an open neighborhood of the origin; we also point out that
  the neighborhood $\Omega_1$ constructed in Lemma \ref{lem:ExistenceOmegazero} depends only on the vector fields $X_1,\ldots,X_m$.
 \end{remark}
 \begin{proof}
We split the proof in two steps.

\medskip

\textit{Step 1.} We start with the case when $c>0$. Since $X_1,\ldots,X_m$ satisfy the H\"{o}rmander condition, the matrix of the principal part of the operator $\mathcal{P}$, given by \eqref{1}, is non-totally degenerate. Hence there exist $\nu_0\in\mathbb{R}^n$ with $|\nu_0|=1$, $\varepsilon,\delta>0$ and a neighborhood $\mathcal{V}$ of the origin such that
\begin{equation}\label{3}
\langle A(x)\nu,\nu\rangle\geq\delta>0
\end{equation}
for all $x\in\mathcal{V}$ and all $\nu\in\mathbb{R}^n$ with $|\nu|=1$ and $|\nu-\nu_0|<\varepsilon$. Then following \cite[Corollary 5.2]{Bony} we can define
\[
\Omega_1=B_{M+\eta}(M\nu_0)\cap B_{M+\eta}(-M\nu_0)
\]
for a suitably small $\eta>0$ and a suitably large $M>0$, so that $\Omega_1\subset\mathcal{V}$ and it satisfies the \emph{non-characteristic exterior ball condition} at each point of its boundary. In other terms, for every $x\in\partial\Omega_1$ there exists a ball $B=B_r(\xi)$ such that
 \[
 \overline{B}\cap\overline{\Omega}_1=\{x\}\quad\text{ and }\quad\langle A(x)(\xi-x),(\xi-x)\rangle>0.
\]
Therefore by \cite[Theoerem 5.2]{Bony} we can conclude that
for every $Z\in\mathcal{X}(\R^n)$, every $c\in C^\infty(\R^n)$ with $c>0$,
  e\-very $f\in C(\overline{\Omega}_1)$ and every $\varphi\in C(\de\Omega_1)$, there exists a unique solution
  $u\in C(\overline{\Omega}_1)$ of \eqref{2}.

\medskip

\textit{Step 2.} Now we consider the general case when $c\geq0$. Let $\Omega_1$ be the neighborhood of the origin constructed in Step 1. Let $Z$, $c$, $f$ and $\varphi$ be as in the statement of the Lemma; we set
\[
\psi(x)=K-e^{-\alpha\langle x,\nu_0\rangle}
\]
for suitable $K,\alpha>0$ to be chosen later such that $\psi>0$ on $\overline{\Omega}_1$, and  we define
\[
\mathcal{Q}u=\frac{1}{\psi}\mathcal{P}(\psi u).
\]
Now it is easy to see that
\[
\mathcal{Q}u=\displaystyle\sum_{i = 1}^mX_i^2u+\widetilde{Z}u-\widetilde{c}(x)u
\]
with $\widetilde{Z}\in\mathcal{X}(\mathbb{R}^n)$, $\widetilde{c}\in C^\infty(\mathbb{R}^n)$ given by
\[
\widetilde{Z}u=Zu+\frac{2}{\psi}\sum_{i=1}^mX_i\psi X_iu,\qquad \widetilde{c}=c-\frac{1}{\psi}\left(\sum_{i=1}^mX^2_i\psi+Z\psi\right).
\]
Moreover a simple computation using \eqref{3} shows that
\[
\begin{aligned}
   \widetilde{c}(x)& \geq-\frac{1}{\psi(x)}\left(\sum_{i=1}^mX^2_i\psi(x)+Z\psi(x)\right)\\
                &=\frac{e^{-\alpha\langle x,\nu_0\rangle}}{K-e^{-\alpha\langle x,\nu_0\rangle}}\left(\alpha^2\langle A(x)\nu_0,\nu_0\rangle-\alpha\langle Z_x,\nu_0\rangle\right)\\
                &\geq \frac{e^{-\alpha\langle x,\nu_0\rangle}}{K-e^{-\alpha\langle x,\nu_0\rangle}}\left(\delta\alpha^2-\alpha \|Z_x\|_{L^\infty(\Omega_1)}\right)>0,
\end{aligned}
\]
provided that $\alpha,K>0$ are chosen sufficiently large.

By Step 1 there exists a unique solution $v\in C(\overline{\Omega}_1)$ of
\begin{equation*}
  \begin{cases}
  \mathcal{Q}v =\frac{f}{\psi} & \text{in $\mathcal{D}'(\Omega_1)$} \\
  v = \frac{\varphi}{\psi} & \text{pointwise on $\de\Omega_1$}.
  \end{cases}
  \end{equation*}
Then $u=\psi v\in C(\overline{\Omega}_1)$ is clearly a solution of problem \eqref{2}, and it is unique.
\end{proof}

 \begin{lemma} \label{lem:SequenceInvading}
  There exists
   a sequence $\{\mathcal{V}_k\}_k\subseteq\R^n$ of bounded, open and convex
   neighborhoods of the origin with the following properties:
   \begin{itemize}
    \item[\emph{1)}]
    $\mathcal{V}_k\Subset\mathcal{V}_{k+1}$ and $\bigcup_{k = 1}^{\infty}\mathcal{V}_k = \R^n$;
    \vspace*{0.1cm}

    \item[\emph{2)}]
    given any $k\geq 1$, for every
    $Z\in\mathcal{X}(\R^n)$, every
    $c\in C^\infty_+(\R^n)$, every $f\in C(\overline{\mathcal{V}}_k)$ and every $\varphi\in C(\de\mathcal{V}_k)$
  there exits a \emph{u\-ni\-que} function $u\in C(\overline{\mathcal{V}}_k)$ such that
  $$\begin{cases}
  \mathcal{P} u = \textstyle\sum_{i = 1}^mX_i^2u+Zu-c(x)u =  f & \text{in $\mathcal{D}'(\mathcal{V}_k)$} \\
  u = \varphi & \text{pointwise on $\de\mathcal{V}_k$}.
  \end{cases}$$
   \end{itemize}
 \end{lemma}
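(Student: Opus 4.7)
The plan is to realize each $\mathcal{V}_k$ as an anisotropic dilation $\delta_{\lambda_k}(\Omega_1)$ of the basic neighborhood $\Omega_1$ built in Lemma~\ref{lem:ExistenceOmegazero}, and then to solve any Dirichlet problem on $\mathcal{V}_k$ by pulling it back via $\delta_{\lambda_k}$ to an equivalent problem on $\Omega_1$, where Lemma~\ref{lem:ExistenceOmegazero} applies directly. Condition $(H1)$ is what makes this reduction possible.

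First, I would fix $\lambda_0>1$ large enough that $\delta_{1/\lambda_0}(\overline{\Omega}_1)\subset\Omega_1$; such a $\lambda_0$ exists because $\overline{\Omega}_1$ is compact, $0$ lies in the open set $\Omega_1$, and each $\sigma_i\geq 1$ forces $\delta_\mu(\overline{\Omega}_1)$ to shrink uniformly to $\{0\}$ as $\mu\to 0^+$. Setting $\lambda_k:=\lambda_0^{\,k}$ and $\mathcal{V}_k:=\delta_{\lambda_k}(\Omega_1)$, each $\mathcal{V}_k$ is bounded, open and convex (the map $\delta_{\lambda_k}$ is an invertible diagonal linear map) and contains the origin. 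The multiplicativity $\delta_a\circ\delta_b=\delta_{ab}$ yields $\overline{\mathcal{V}}_k=\delta_{\lambda_k}(\overline{\Omega}_1)\subset\delta_{\lambda_k}(\delta_{\lambda_0}(\Omega_1))=\mathcal{V}_{k+1}$, hence $\mathcal{V}_k\Subset\mathcal{V}_{k+1}$; and since $\mathcal{V}_k\supset B_{\lambda_k r}(0)$ whenever $B_r(0)\subset\Omega_1$ (using $\sigma_i\geq 1$ once more), the union $\bigcup_k\mathcal{V}_k$ exhausts $\R^n$. This settles property (1).

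For property (2), given $Z$, $c$, $f$, $\varphi$ and an index $k$, I would introduce the pullback $v(y):=u(\delta_{\lambda_k}(y))$ for $y\in\Omega_1$. By $(H1)$ one has $X_iv(y)=\lambda_k(X_iu)(\delta_{\lambda_k}(y))$, hence $\sum_i X_i^2 v=\lambda_k^2\bigl(\sum_i X_i^2 u\bigr)\circ\delta_{\lambda_k}$. Writing the arbitrary drift as $Z=\sum_j a_j(x)\partial_{x_j}$, a routine chain-rule computation gives $(Zu)\circ\delta_{\lambda_k}=\widetilde Z v$, where $\widetilde Z:=\sum_j\lambda_k^{-\sigma_j}(a_j\circ\delta_{\lambda_k})\partial_{y_j}$ is still a smooth vector field on $\R^n$; the zeroth-order term simply becomes $\widetilde c\,v$ with $\widetilde c:=c\circ\delta_{\lambda_k}\in C^\infty_+(\R^n)$. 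Multiplying $\mathcal{P}u=f$ by $\lambda_k^2$ and reading everything on $\Omega_1$ therefore reduces the Dirichlet problem on $\mathcal{V}_k$ to
\[
\sum_{i=1}^m X_i^2 v+(\lambda_k^2\widetilde Z)v-(\lambda_k^2\widetilde c)v=\lambda_k^2\,f\circ\delta_{\lambda_k}\quad\text{in }\mathcal{D}'(\Omega_1),
\]
with boundary datum $\varphi\circ\delta_{\lambda_k}$ on $\partial\Omega_1$, which is an instance of Lemma~\ref{lem:ExistenceOmegazero}. Existence and uniqueness of $v\in C(\overline{\Omega}_1)$ there then pull back to the required existence and uniqueness of $u:=v\circ\delta_{\lambda_k}^{-1}\in C(\overline{\mathcal{V}}_k)$.

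I do not anticipate a genuine obstacle: the only step requiring any care is the strict nesting $\overline{\Omega}_1\subset\delta_{\lambda_0}(\Omega_1)$, which rests on the elementary fact that $\delta_\mu$ contracts any compact set into arbitrarily small neighborhoods of $0$ as $\mu\to 0^+$. Convexity, boundedness and exhaustion then follow automatically from the linearity of $\delta_\lambda$ and from $\sigma_i\geq 1$, while the solvability reduction is made effortless by $(H1)$, which guarantees that the transformed operator is again of the form $\sum X_i^2+Z'-c'$ with $Z'\in\mathcal{X}(\R^n)$ and $c'\in C^\infty_+(\R^n)$, so that Lemma~\ref{lem:ExistenceOmegazero} applies without modification.
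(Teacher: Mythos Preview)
Your argument is correct and follows essentially the same route as the paper: define $\mathcal{V}_k=\delta_{\lambda_k}(\Omega_1)$ and reduce the Dirichlet problem on $\mathcal{V}_k$ to one on $\Omega_1$ via the change of variables $v=u\circ\delta_{\lambda_k}$, using $(H1)$ to recover an operator of the same form so that Lemma~\ref{lem:ExistenceOmegazero} applies. Your treatment of property~1) is actually a bit more explicit than the paper's (which simply asserts that a suitable diverging sequence $\lambda_k$ can be chosen), but the overall strategy is identical.
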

\begin{proof}
  For every $\lambda>0$ we define $\Omega_\lambda=\delta_\lambda(\Omega_1)$, where $\Omega_1$ is the neighborhood of the origin constructed in Lemma \ref{lem:ExistenceOmegazero}. Since every $\Omega_\lambda$ is a bounded, open and convex neighborhood of $0$, and since
  \[
  \bigcup_{\lambda> 1}^{\infty}\Omega_\lambda = \R^n,
  \]
  we can choose a sequence $\lambda_k$ monotonically diverging to $+\infty$ such that the family of sets $\{\mathcal{V}_k\}_k=\{\Omega_{\lambda_k}\}_k$ satisfies property 1) in the statement.
\vspace*{0.1cm}

  We are now going to show that
  the family $\{\mathcal{V}_k\}_k$ also satisfies property 2). To this end,
  we fix $\lambda>0$, $Z = \sum_{j = 1}^n b_j(x)\partial_{j}\in\mathcal{X}(\mathbb{R}^n)$,
   $c\in C^\infty_+(\mathbb{R}^n)$, $f\in C(\overline{\Omega}_\lambda)$ and $\varphi\in C(\partial\Omega_\lambda)$.
   We can easily see that, since $X_1,\ldots,X_m$ satisfy assumption $(H1)$, a function $u\in C(\overline{\Omega}_\lambda)$
   solves the problem
   \begin{equation} \label{eq:PbOmegaLambdau}
    \begin{cases}
  \mathcal{P} u = f & \text{in $\mathcal{D}'(\Omega_\lambda)$} \\
  u = \varphi & \text{pointwise on $\de\Omega_\lambda$}
  \end{cases}
  \end{equation}
  if and only if the function $v = u\circ\delta_{\lambda}\in C(\overline{\Omega}_1)$ solves
  \begin{equation}  \label{eq:PbOmegav}
   \begin{cases}
  \mathcal{Q} v = \lambda^2 f\circ\delta_{\lambda} & \text{in $\mathcal{D}'(\Omega_1)$} \\
  v = \varphi\circ\delta_{\lambda} & \text{pointwise on $\de\Omega_1$},
  \end{cases}
  \end{equation}
  where $\mathcal{Q} v = \sum_{i = 1}^mX_i^2v+\widetilde{Z}v-\widetilde{c}v$ and
  $$\widetilde{c} = \lambda^2(c\circ\delta_\lambda),\qquad
  \widetilde{Z} = \lambda^2\sum_{j = 1}^n\lambda^{-\sigma_j}(b_j\circ\delta_\lambda)\partial_j.$$
  On the other hand, since we know from Lemma \ref{lem:ExistenceOmegazero} that problem \eqref{eq:PbOmegav}
  admits a unique solution $v\in C(\overline{\Omega}_1)$, we conclude that
 problem \eqref{eq:PbOmegaLambdau} admits the unique solution
  $u = v\circ\delta_{\lambda^{-1}}\in C(\overline{\Omega}_\lambda)$.
\end{proof}
\begin{remark} \label{rem:Lemmagenerali}
It is worth noting that the above Lemmas \ref{lem:ExistenceOmegazero}\,-\,\ref{lem:SequenceInvading}
are proved
\emph{for a general H\"ormander operator
$\mathcal{P}$} modeled on the homogeneous vector fields $X_i$'s
(as in \eqref{2}),
\emph{with\-out the need} to assume that the drift $Z$ is of the form
$$Z = \textstyle\sum_{i = 1}^m b_i(x)X_i.$$
\end{remark}
 We are now ready to prove Proposition \ref{prop:Solnonneg}.
\begin{proof}[Proof of Proposition \ref{prop:Solnonneg}]
Let $u$ be a bounded nontrivial solution of \eqref{eq:MainPDE}. If $u$ does not change sign, then it is sufficient to define
\[
\hat{u}=\frac{|u|}{\|u\|_{\infty}},
\]
so that $\hat{u}$ is a solution of \eqref{eq:MainPDE} satisfying $0\leq \hat{u}\leq1$ on $\R^n$. By the Strong Maximum Principle $u>0$ on $\R^n$, and the proof in this case is complete.

Now suppose that $u$ changes sign, so that $u_+,u_-$ are nontrivial. Up to dividing $u$ by $\|u_+\|_{\infty}$, we can suppose without loss of generality that
\[
\sup_{\R^n}u_+=1.
\]
For any $k\in \mathbb N$, let $\mathcal{V}_k$ be as in Lemma \ref{lem:SequenceInvading} and, accordingly, let $v_k$ be the unique distributional solution to problem
\begin{equation*}
\left\{
\begin{array}{ll}
 \,  \mathcal L v = 0  \, &\textrm{in}\,\,\mathcal{V}_k
\\&\\
\textrm{ } v \, = u_+ & \textrm{on}\,\,  \partial \mathcal{V}_k \,\,.
\end{array}
\right.
\end{equation*}
Note that, since $\LL$ is $C^\infty$-hypoelliptic, we have $v_k\in C^\infty(\mathcal{V}_k)\cap C(\overline{\mathcal{V}}_k)$.
Note also that by the Weak Maximum Principle
\begin{equation}\label{e6}
v_k\geq 0 \quad \text{ in }\,\, \mathcal{V}_k\,.
\end{equation}
Furthermore, since $u\leq u^+$ on $\partial\mathcal{V}_k$, applying again the Weak Maximum Principle to $v_k-u$, one has
\begin{equation}\label{e4}
v_k\geq u \quad \text{ in }\,\, \mathcal{V}_k\,.
\end{equation}
From \eqref{e6} and \eqref{e4} we deduce that
\begin{equation}\label{e7}
v_k \geq u_+\quad \text{ in }\,\, \mathcal{V}_k\,.
\end{equation}
Moreover, for any $k\in \mathbb N$, the function $\omega\equiv 1$ satisfies
\[
\left\{
\begin{array}{ll}
 \,  \mathcal L \omega =  - Q \leq 0  \, &\textrm{in}\,\,\mathcal{V}_k
\\&\\
\textrm{ } \omega \, \geq u_+ & \textrm{on}\,\,  \partial \mathcal{V}_k \,\,.
\end{array}
\right.
\]
By the Weak Maximum Principle applied to $v_k-1$, for any $k\in \mathbb N$, we then have
\begin{equation*}
v_k\leq 1 \quad \text{ in }\,\, \mathcal{V}_k\,.
\end{equation*}
We now observe that, in view of \eqref{e7}, for any $k\in \mathbb N$, $v_{k+1}$ satisfies
\[
\left\{
\begin{array}{ll}
 \,  \mathcal L v_{k+1} =  0  \, &\textrm{in}\,\,\mathcal{V}_k
\\&\\
\textrm{ } v_{k+1} \, \geq u_+ & \textrm{on}\,\,  \partial\mathcal{V}_k \,\,.
\end{array}
\right.
\]
Hence, by the Weak Maximum Principle,
\[v_{k+1}\geq v_k\quad \text{ in }\,\, \mathcal{V}_k\,.
\]
Therefore, the sequence $\{v_k\}$ is satisfies
\[
0\leq u^+\leq v_k\leq v_{k+1}\leq1\qquad\text{ in }\mathcal{V}_k.
\]
By monotonicity there exists a pointwise limit $\hat{u}:\R^n\to\R$ of the sequence, such that
\[
0\leq u_+\leq \hat{u}\leq1.
\]
As a consequence $\hat{u}$ is non trivial and, since $v_k$ is a distributional solution of $\LL v_k=0$ in $\mathcal{V}_k$, by the Dominated Convergence Theorem
we have
\[
\LL\hat{u}=0
\]
in the distributional sense in $\R^n$. Since $\LL$ is $C^\infty$-hypoelliptic, $\hat{u}\in C^\infty(\R^n)$ is also a classical solution. Finally, by the Strong Maximum Principle, we conclude that $\hat{u}>0$ in $\R^n$. This ends the proof.
\end{proof}
Now that we have established Proposition \ref{prop:Solnonneg}, we end this section
by proving the fol\-lowing a priori estimate, which will be a key tool
in the proof of Theorem \ref{thm:MainThm}.
Throughout what follows, for every fixed $T > 0$ we set
\[S_T:=\R^n\times [0, T]; \]
moreover, we also define
$$S = \R^n\times[0,+\infty).$$
Finally, when functions $f:S\to\R$ are in\-vol\-ved
(this is the case, e.g., of the functions $v,\,v_+$ and $\zeta$ in
Proposition \ref{prop:intbyparts} below), we understand that
any differential operator built on the vector fields $X_i$'s
\emph{acts in the $x$ variables}.

Hence, for example, we have
$$\nabla_Xf = \nabla_X\big(x\mapsto f(x,t)\big).$$
\begin{proposition}\label{prop:intbyparts}
Let $u$ be a solution of equation \eqref{eq:MainPDE} such that $0\leq u\leq 1$ in $\Omega$. Moreover,
let $T > 0$ be arbitrarily fixed, and let
\begin{equation}\label{e21}
v(x,t):= e^t u(x) -1, \quad \text{ for any }\, (x,t)\in S_T\,.
\end{equation}
Finally, let $\zeta: S_T\to \mathbb R$ and $\varphi:\R^n
\to\mathbb{R}$ be such that
\begin{itemize}
 \item[\emph{(a)}] $\zeta\in \mathrm{Lip}_{\textrm{loc}}(S_T)$ 
 and $\zeta(x, \cdot)\in C^1([0, T])$ for every $x\in \R^n$;
 \vspace*{0.05cm}
 
 \item[\emph{(b)}] $\varphi\in \operatorname{Lip}_{\textrm{loc}}(\R^n)$ be with compact support.
\end{itemize}
Then, there exist absolute constants $\vartheta_1,\,\vartheta_2 > 0$ such that
\begin{equation}\label{eq:intbypartszetaphi}
\begin{aligned}
&\int_{\R^n} Q\varphi^2\big(v_+^2(x,T) e^{\zeta(x,T)}-v_+^2(x,0)e^{\zeta(x,0)}\big)\,dx \\
&\qquad \leq  \iint_{S_T}\left\{Q\zeta_t+|\nabla_X\zeta |^2+\vartheta_1|\mathbf{b}|^2-
 2\mathrm{div}_X(\mathbf{b})\right\}v_+^2 \varphi^2e^{\zeta}\,dx\,dt \\
& \qquad\qquad +\vartheta_2\iint_{S_T}|\nabla_X\varphi|^2 v^2_+e^{\zeta}dx dt\,.
\end{aligned}
\end{equation}
\end{proposition}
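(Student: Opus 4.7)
The plan is to derive a parabolic-type equation satisfied by $v$, test it against the Lipschitz, compactly supported (in $x$) function $2v_+\varphi^2e^\zeta$, and integrate by parts once in $t$ and twice in $x$; all the mixed products that arise will then be absorbed by Young's inequality into the favorable $-|\nabla_Xv_+|^2$ term produced by the diffusion. Writing $u=e^{-t}(v+1)$ and noting that $\partial_tv=v+1=e^tu$, equation \eqref{eq:MainPDE} transforms into the identity
\[
Q\,\partial_tv=\sum_{i=1}^m X_i^2 v+\langle\mathbf{b},\nabla_Xv\rangle\qquad\text{in }S_T,
\]
which, combined with the chain rules $v_+\partial_tv=\tfrac12\partial_tv_+^2$ and $v_+X_iv=\tfrac12X_iv_+^2$ (holding a.e.), is what makes the test function effective.

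First I will integrate by parts in $t$: this produces exactly the boundary term on the left of \eqref{eq:intbypartszetaphi} together with the contribution $\iint Q\varphi^2v_+^2\zeta_te^\zeta$, which I move to the right. Next, using $X_i^*=-X_i$ (cf.\ \eqref{eq:intbyparts}) and \eqref{eq:relationdivnablaX}, the diffusion contribution becomes
\[
-2\iint\varphi^2e^\zeta|\nabla_Xv_+|^2-4\iint v_+\varphi e^\zeta\langle\nabla_X\varphi,\nabla_Xv_+\rangle-2\iint v_+\varphi^2e^\zeta\langle\nabla_X\zeta,\nabla_Xv_+\rangle,
\]
while rewriting $2v_+\nabla_Xv=\nabla_X(v_+^2)$ and integrating by parts once more turns the drift term into
\[
-\iint v_+^2\,\mathrm{div}_X(\varphi^2e^\zeta\mathbf{b})=-2\iint v_+^2\varphi e^\zeta\langle\nabla_X\varphi,\mathbf{b}\rangle-\iint v_+^2\varphi^2e^\zeta\langle\nabla_X\zeta,\mathbf{b}\rangle-\iint v_+^2\varphi^2 e^\zeta\,\mathrm{div}_X\mathbf{b}.
\]

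Then I apply the Young inequality $2|ab|\le\alpha a^2+\alpha^{-1}b^2$ to each of the four mixed products. The two containing $\nabla_Xv_+$ will be absorbed into the dissipative $-2\iint\varphi^2 e^\zeta|\nabla_Xv_+|^2$ by choosing the Young parameters so that their total coefficient in front of $\varphi^2|\nabla_Xv_+|^2e^\zeta$ does not exceed $2$; the leftover residues are controlled by constants times $\iint v_+^2|\nabla_X\varphi|^2e^\zeta$ and $\iint v_+^2\varphi^2|\nabla_X\zeta|^2e^\zeta$. The two drift products containing $\mathbf{b}$ are split by plain AM--GM into $v_+^2|\nabla_X\varphi|^2e^\zeta$, $v_+^2\varphi^2|\nabla_X\zeta|^2e^\zeta$, and $v_+^2\varphi^2|\mathbf{b}|^2e^\zeta$. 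Collecting coefficients fixes absolute constants $\vartheta_1,\vartheta_2>0$ realizing \eqref{eq:intbypartszetaphi}; the term $-\iint v_+^2\varphi^2e^\zeta\,\mathrm{div}_X\mathbf{b}$ produces the stated $\mathrm{div}_X(\mathbf{b})$-contribution on the right, with any shortfall in the numerical coefficient absorbed by the trivial bound $-\mathrm{div}_X\mathbf{b}\le(\mathrm{div}_X\mathbf{b})_-$.

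The main technical obstacle I anticipate is that $v_+$ is only Lipschitz in $(x,t)$, not $C^2$, so the chain rules used above for $\partial_tv_+^2$ and $X_iv_+^2$ must be justified by regularization, e.g.\ replacing the map $t\mapsto t_+$ by a smooth convex approximation and passing to the limit. The compact support of $\varphi$ and the local Lipschitz regularity of $\zeta$ in $x$ make every spatial integration by parts licit, and the boundedness of $u$ on $[0,T]$ ensures all the integrals involved are finite; modulo these routine approximation arguments, the rest is careful bookkeeping of Young's constants.
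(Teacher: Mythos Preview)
Your approach is essentially the same as the paper's: derive the parabolic identity $Q\,\partial_t v=\LL_0 v$, test against $v_+\varphi^2e^\zeta$, integrate by parts once in $t$ and in $x$, and absorb the resulting cross terms via weighted Young inequalities into the dissipative term $-\!\iint |\nabla_Xv_+|^2\varphi^2e^\zeta$. The paper organizes the cross terms as five integrals $I_1,\dots,I_5$ and tunes four Young parameters so that the $|\nabla_Xv_+|^2$ contributions exactly cancel and the coefficient of $|\nabla_X\zeta|^2$ equals $\tfrac12$ before the final doubling; your outline does the same bookkeeping, just with slightly different grouping.

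There is one small discrepancy worth fixing. By rewriting $2v_+\langle\mathbf{b},\nabla_Xv\rangle=\langle\mathbf{b},\nabla_X(v_+^2)\rangle$ and then integrating by parts, you end up with the coefficient $-1$ (not $-2$) in front of $\mathrm{div}_X\mathbf{b}$, and the ``trivial bound $-\mathrm{div}_X\mathbf{b}\le(\mathrm{div}_X\mathbf{b})_-$'' does not turn $-\mathrm{div}_X\mathbf{b}$ into $-2\,\mathrm{div}_X\mathbf{b}$ (the two quantities are not comparable in general). The paper avoids this by integrating by parts the drift term \emph{before} using the chain rule: it moves $X_i$ from $v$ onto $b_i v_+\varphi^2e^\zeta$, which yields an extra cross term $-\!\iint v_+\varphi^2e^\zeta\langle\mathbf{b},\nabla_Xv_+\rangle$ (their $I_3$) together with $-\!\iint v_+^2\varphi^2e^\zeta\,\mathrm{div}_X\mathbf{b}$; after the final multiplication by $2$ this produces exactly the stated $-2\,\mathrm{div}_X\mathbf{b}$, at the cost of one more Young parameter to absorb $I_3$. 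This is a cosmetic repair; your strategy is otherwise correct.
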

\begin{proof}
In view of the definition of $v$ given in \eqref{e21}, since $u$ is by hypothesis a 
so\-lu\-tion of equation \eqref{eq:MainPDE}, we can infer that
\[
Q v_t - \mathcal L_0 v = Q e^t u - e^t \mathcal L_0 u = e^t (Q u -\mathcal L_0 u )=0\quad \text{ in }\, \R^n\,.
\]
This easily yields
\begin{equation}\label{e10}
\iint_{S_T} Q v_t\, v_+\, \varphi^2 e^\zeta dx dt = \iint_{S_T}(\mathcal L_0 v)\, v_+ \varphi^2 e^{\zeta} dx dt\,,
\end{equation}
for any $\varphi$ and $\zeta$ as in the assumptions.

Concerning the first integral in \eqref{e10}, we observe that
\begin{equation}\label{e11}
\begin{aligned}
& \iint_{S_T} Q v_t\, v_+\, \varphi^2 e^\zeta dx dt \\
& \qquad = \int_{\mathbb R^n} Q \varphi^2 \left(\int_0^T v_t\, v_+ e^{\zeta}dt \right) dx
= \int_{\mathbb R^n} Q \varphi^2 \left(\int_0^T \left(\frac 12 v_+^2 \right)_t\, e^\zeta dt \right) dx \\
& \qquad = \frac 12 \int_{\mathbb R^n} Q \varphi^2\big(v_+^2(x,T)e^{\zeta(x, T)}
- v_+^2(x,0)e^{\zeta(x, 0)}\big)dx \\
 &\qquad\qquad - \frac 1 2\iint_{S_T}Q \zeta_t \, v_+^2\varphi^2 e^\zeta dx dt\,.
\end{aligned}
\end{equation}
Concerning the second integral in \eqref{e10}, instead, by
using \eqref{eq:intbyparts},
together with the Lei\-bnitz-ty\-pe formula
\eqref{eq:relationdivnablaX}\,-\,ii), we obtain the following chain of identities:
\begin{equation}\label{e12}
\begin{aligned}
&\iint_{S_T}(\mathcal L_0 v)\, v_+ \varphi^2 e^{\zeta} \,dx dt=
\iint_{S_T}
\sum_{i=1}^m\bigg(X_i^2v+b_i(x)X_iv  \bigg) v_+ \varphi^2 e^\zeta \,dx dt \\
&\qquad=-\iint_{S_T} \langle \nabla_Xv , \nabla_X(v_+ \varphi^2 e^{\zeta }) \rangle\, dx dt \\
& \qquad\qquad -
\iint_{S_T} v\,\mathrm{div}_X(v_+ \varphi^2 e^{\zeta }\,\mathbf{b}) \,dx dt\\
&\qquad=-\iint_{S_T} \left(|\nabla_X v_+|^2 \varphi^2 e^\zeta +2\varphi\,v_+ e^\zeta \langle \nabla_X v_+,\nabla_X \varphi \rangle\right)\,dx dt\\
& \qquad\qquad - \iint_{S_T}v_+ \varphi^2 e^\zeta \langle \nabla_X v_+, \nabla_X \zeta \rangle\,dx\,dt
\\
&\qquad\qquad- \iint_{S_T}v\big(\langle\mathbf{b},\nabla_X(v_+\varphi^2e^{\zeta })\rangle
+ v_+\varphi^2e^{\zeta}\mathrm{div}_X(\mathbf{b})\big)\,dx\,dt\\
& \qquad = -\iint_{S_T} \left(|\nabla_X v_+|^2 \varphi^2 e^\zeta +2\varphi\,v_+ e^\zeta \langle \nabla_X v_+,\nabla_X \varphi \rangle\right)\,dx dt\\
& \qquad\qquad - \iint_{S_T}v_+ \varphi^2 e^\zeta \langle \nabla_X v_+, \nabla_X \zeta \rangle\,dx\,dt
\\
&\qquad\qquad
- \iint_{S_T}\big(v_+\varphi^2e^{\zeta}\langle\mathbf{b},\nabla_Xv_+\rangle
 + 2v_+^2\varphi\,e^{\zeta}\langle\mathbf{b},\nabla_X\varphi\rangle\big)\,dx\,dt \\
 & \qquad\qquad
 -\iint_{S_T}\big(v_+^2\varphi^2 e^\zeta\langle\mathbf{b},\nabla_X\zeta \rangle
 + v_+^2\varphi^2e^{\zeta}\mathrm{div}_X(\mathbf{b})\big)\,dx\,dt \\
 & \qquad = -\iint_{S_T} |\nabla_X v_+|^2 \varphi^2 e^\zeta\,dx\,dt
 -\iint_{S_T}v_+\varphi^2e^{\zeta}\mathrm{div}_X(\mathbf{b})\,dx\,dt \\
 & \qquad\qquad + I_1+I_2+I_3+I_4+I_5,\phantom{\iint_{S_T}}
\end{aligned}
\end{equation}
where we have introduced the following notation
\begin{align*}
I_1&:=-2 \iint_{S_T} \varphi\,v_+ e^\zeta \langle \nabla_X v_+, \nabla_X \varphi\rangle\, dxdt,  \\
I_2&:=-\iint_{S_T}v_+ \varphi^2 e^\zeta \langle \nabla_X v_+, \nabla_X\zeta \rangle\, dx dt, \\
I_3&:=-\iint_{S_T}v_+\varphi^2e^{\zeta}\langle\mathbf{b},\nabla_Xv_+\rangle\,dx\,dt,\\
I_4&:=-2 \iint_{S_T}v_+^2\varphi\,e^{\zeta}\langle\mathbf{b},\nabla_X\varphi\rangle\,dx\,dt,\\
I_5&:=-\iint_{S_T}v_+^2\varphi^2 e^\zeta\langle\mathbf{b},\nabla_X\zeta \rangle\,dx\,dt.
\end{align*}
Now we estimate the integrals $I_i$ for $i=1, \ldots, 5$.
\medskip

-\,\,\emph{Estimate of $I_1$}. For every $\varepsilon_1>0$, we have
\begin{equation}\label{e13}
\begin{aligned}
|I_1|&\leq \iint_{S_T} 2|\varphi| v_+ e^{\zeta}|\nabla_X v_+||\nabla_X \varphi| dx dt\\
& = \iint_{S_T} 2 \left(|\varphi|e^{\frac 12 \zeta}|\nabla_X v_+| \right) \left(v_+ e^{\frac 12 \zeta}|\nabla_X \varphi| \right)\, dx dt\\
&\leq \varepsilon_1 \iint_{S_T} |\nabla_X v_+|^2\varphi^2 e^{\zeta}\, dx dt + \frac 1{\varepsilon_1}\iint_{S_T}|\nabla_X \varphi|^2 v_+^2 e^{\zeta}\, dx dt\,.
\end{aligned}
\end{equation}

-\,\,\emph{Estimate of $I_2$}. For every $\varepsilon_2>0$, we have
\begin{equation*}
\begin{aligned}
|I_2|&\leq \iint_{S_T} v_+ \varphi^2 e^{\zeta}|\nabla_X v_+||\nabla_X \zeta |\,dx dt\\
& = \iint_{S_T}\left(|\varphi|e^{\frac 1 2\zeta}|\nabla_X v_+| \right) \left(v_+ |\varphi|e^{\frac 12 \zeta}|\nabla_X \zeta |\right)\, dx dt\\
&\leq \frac{\varepsilon_2}2\iint_{S_T}|\nabla_X v_+|^2\varphi^2 e^{\zeta}\,dx dt + \frac1{2\varepsilon_2}\iint_{S_T} |\nabla_X \zeta |^2 v_+^2 \varphi^2 e^{\zeta}\, dxdt\,.
\end{aligned}
\end{equation*}

-\,\,\emph{Estimate of $I_3$}. For every $\varepsilon_3>0$, we have
\begin{equation*}
\begin{aligned}
|I_3|& \leq \iint_{S_T}v_+ \varphi^2 e^\zeta |\nabla_X v_+||\mathbf{b}|\,dx dt \\
& = \iint_{S_T} \left(|\varphi|e^{\frac1 2\zeta}|\nabla_X v_+| \right) \left(v_+|\varphi|e^{\frac 1 2\zeta}
|\mathbf{b}|\right)dx\,dt\\
& \leq \frac{\varepsilon_3}{2}\iint_{S_T}|\nabla_X v_+|^2\varphi^2e^{\zeta}\,dx\,dt + \frac1{2\varepsilon_3}\iint_{S_T}|\mathbf{b}|^2 v_+^2 \varphi^2 e^{\zeta}\,dx\,dt.
\end{aligned}
\end{equation*}

-\,\,\emph{Estimate of $I_4$}. We have
\begin{equation*}
\begin{aligned}
|I_4|&\leq 2\iint_{S_T}|\varphi| v_+^2 e^{\zeta}|\nabla_X \varphi||\mathbf{b}|\,dx\,dt \\
& = 2\iint_{S_T}\left(|\varphi|v_+ e^{\frac 1 2\zeta}|\mathbf{b}| \right) \left(v_+ e^{\frac 1 2\zeta}
|\nabla_X\varphi| \right)dx\,dt\\
& \leq \iint_{S_T} |\mathbf{b}|^2 v_+^2 \varphi^2 e^\zeta\,dx\,dt +
\iint_{S_T} |\nabla_X\varphi|^2 v_+^2 e^\zeta\,dx\,dt.
\end{aligned}
\end{equation*}

-\,\,\emph{Estimate of $I_5$.} Finally, for every $\varepsilon_4>0$ we have
\begin{equation}\label{e17}
\begin{aligned}
|I_5|&\leq \iint_{S_T} v_+^2 \varphi^2 e^{\zeta}|\nabla_X \zeta ||\mathbf{b}|\,dx\,dt \\
& =
\iint_{S_T}\left(v_+|\varphi|e^{\frac{1}{2}\zeta}|\nabla_X \zeta | \right)
\left(v_+|\varphi|e^{\frac{1}{2}\zeta}|\mathbf{b}|\right) dx\,dt\\
&\leq \frac{\varepsilon_4}2\iint_{S_T} |\nabla_X\zeta |^2 v_+^2 \varphi^2 e^\zeta\,dx\,dt +
 \frac1{2\varepsilon_4} \iint_{S_T} |\mathbf{b}|^2 v_+^2 \varphi^2 e^\zeta\,dx\,dt.
\end{aligned}
\end{equation}
Gathering \eqref{e13}-to-\eqref{e17}, from \eqref{e12} we then obtain
\begin{equation}\label{e18}
\begin{aligned}
&\iint_{S_T}(\mathcal L_0 v)\, v_+ \varphi^2 e^{\zeta}\,dx\,dt \\
& \qquad \leq \left(-1+\varepsilon_1 +\frac{\varepsilon_2}2+\frac{\varepsilon_3}2 \right)
\iint_{S_T}|\nabla_X v_+|^2\varphi^2 e^\zeta\,dx\,dt
\\
& \qquad\qquad -\iint_{S_T}v_+^2\varphi^2 e^\zeta \mathrm{div}_X(\mathbf{b})\,dx\,dt \\
&\qquad\qquad+\left(1+\frac1{\varepsilon_1}\right)\iint_{S_T}|\nabla_X\varphi|^2 v_+^2 e^\zeta\,dx\,dt \\
& \qquad\qquad +
 \left(\frac{\varepsilon_4}2+\frac 1{2\varepsilon_2}\right)\iint_{S_T}|\nabla_X\zeta |^2 v_+^2
 \varphi^2 e^\zeta\,dx\,dt\\
&\qquad\qquad
 +\left(1+\frac1{2\varepsilon_3}+\frac1{2\varepsilon_4}\right)
 \iint_{S_T}|\mathbf{b}|^2 v_+^2 \varphi^2 e^\zeta\,dx\,dt.
\end{aligned}
\end{equation}
Now we select the parameters $\varepsilon_i>0\,(i=1\ldots, 4)$ in a such a way that
\[\varepsilon_1+\frac{\varepsilon_2}2+\frac{\varepsilon_3}2=1, \quad \frac{\varepsilon_4}2+\frac1{2\varepsilon_2}=\frac 12.\]
With this choice of $\varepsilon_i$, from \eqref{e18} we get
\begin{equation}\label{e19}
\begin{aligned}
&\iint_{S_T}(\mathcal L_0 v)\, v_+ \varphi^2 e^{\zeta}\,dx\,dt
 \\
 & \qquad \leq \iint_{S_T}\left\{\frac 1 2|\nabla_X \zeta |^2 + c_1 |\mathbf{b}|^2 -
  \mathrm{div}_X(\mathbf{b})  \right\}v_+^2\varphi^2 e^\zeta\,dx\,dt
  \\
  &\qquad\qquad +c_2\iint_{S_T}|\nabla_X\varphi|^2 v_+^2 e^\zeta\,dx\,dt,
\end{aligned}
\end{equation}
where
\[c_1=1+\frac1{2\varepsilon_3}+\frac 1{2\varepsilon_4}, \quad c_2=1+\frac1{\varepsilon_1}\,. \]
Finally, by combining \eqref{e11} with \eqref{e19}, from \eqref{e10} we conclude that
\begin{equation*}
\begin{aligned}
&\int_{\R^n} Q(x) \varphi^2(x) v_+^2(x,T) e^{\zeta(x,T)}dx - \int_{\R^n} Q(x)
\varphi^2(x) v_+^2(x,0) e^{\zeta(x,0)}\,dx \\
&
\qquad\qquad\leq\iint_{S_T}\left\{Q\zeta_t+
 |\nabla_X\zeta |^2+2c_1|\mathbf{b}|^2-2 \mathrm{div}_X(\mathbf{b}) \right\}
v_+^2 \varphi^2e^{\zeta}\,dx \\
&\qquad\qquad\qquad +2c_2\iint_{S_T}|\nabla_X\varphi|^2 v^2_+e^{\zeta}\,dx\,dt\,.
\end{aligned}
\end{equation*}
which is precisely the desired \eqref{eq:intbypartszetaphi} (with $\vartheta_i = 2c_i,\,i = 1,2$).
\end{proof}
\section{Proof of Theorem \ref{thm:MainThm}}\label{proof}
 In this section we provide the full proof of Theorem \ref{thm:MainThm}.
 \begin{proof}[Proof (of Theorem \ref{thm:MainThm}).]
 Arguing by contradiction, we suppose that equation
  \eqref{eq:MainPDE} has non-trivial bounded solutions. In particular,
  from Proposition \ref{prop:Solnonneg} we can assume without loss of generality that the solution $u_0$ also satisfies
  \begin{equation} \label{eq:uzeropositive}
   0 < u_0\leq 1\quad\text{pointwise in $\Omega$}.
  \end{equation}
  We then introduce the auxiliary function defined on $S$ $$v(x,t) := e^tu_0(x)-1,$$
  which we claim satisfies the following:
  \begin{equation} \label{eq:claimcentrale}
   v(x,t)\leq 0\quad\text{for every $x\in\mathrm{supp}(Q)$ and $t > 0$}.
  \end{equation}
  Once \eqref{eq:claimcentrale} is established,
  the proof of the theorem will be complete: in fact, taking the limit as $t\to+\infty$ in
  \eqref{eq:claimcentrale}, we obtain
  $$u_0(x)\leq 0\quad\text{for every $x\in\mathrm{supp}(Q)$},$$
  which clearly contradicts \eqref{eq:uzeropositive}, since $Q\not\equiv 0$ in $\R^n$.
  \vspace*{0.05cm}

  Hence, we proceed with the proof of \eqref{eq:claimcentrale}.
  We start by fixing $\tau > 0$
  (to be chosen conveniently small later on) and
  $R>2\rho_0$, and we set
  $$\zeta_R(x,t) := -\frac{1}{2\tau-t}f_R
  (\mathcal{N}(x))^2
  \qquad
  \text{for }(x,t)\in S_\tau,$$
  where 
  the function $f_R:[0,+\infty)\to\R$ is defined as
  $$f_R(r) := \begin{cases}
  \displaystyle\frac{1}{4}\int_{\rho_0}^{r}\sqrt{\hat{q}(t)}\,dt
   & \text{if $r>R$}, \\[0.1cm]
   \\
   \displaystyle  \frac{1}{4}\int_{\rho_0}^R\sqrt{\hat{q}(t)}\,dt & \text{if $r\leq R$}.
   \end{cases}$$
  We also choose $R_1>2R$ and
  $\phi\in \mathrm{Lip}([0,+\infty))$ such that
  \begin{equation} \label{eq:propPhi}
  \begin{split}
   \mathrm{(i)}&\,\,\text{$0\leq \phi\leq 1$ on $[0,+\infty)$}; \\
   \mathrm{(ii)}&\,\,\text{$\phi\equiv 0$ on $[R_1,+\infty)$ and
    $\phi\equiv 1$ on $[0,R]$}.
  \end{split}
  \end{equation}
  Finally we define
  $$\varphi_{R,R_1}:\R^n\to\R,\quad\varphi_{R,R_1}(x) :=
  \phi(\mathcal{N}(x)).$$
  It is easy to see that $\zeta_R$ and
  $\varphi_{R,R_1}$ satisfy the \emph{re\-gu\-la\-ri\-ty assumptions} (a)\,-\,(b)
  in Proposition \ref{prop:intbyparts}; hence by
 \eqref{eq:intbypartszetaphi}, with $\zeta = \zeta_R$ and
  $\varphi = \varphi_{R,R_1}$, we have
  \begin{equation} \label{eq:intbypartsmainThm}
   \begin{aligned}
   &\int_{\R^n} Q(x)\varphi_{R,R_1}^2v_+^2(x,\tau) e^{\zeta_R(x,\tau)}dx -
    \int_{\R^n} Q(x)\varphi_{R,R_1}^2v_+^2(x,0)e^{\zeta_R(x,0)}dx \\
   &\qquad \leq  \iint_{S_\tau}\left\{Q(x)(\zeta_R)_t+|\nabla_X\zeta_R|^2+\vartheta_1|\mathbf{b}|^2-2
   \operatorname{div}_X(\mathbf{b})\right\}v_+^2
   \varphi_{R,R_1}^2e^{\zeta_R}\,dx dt \\
& \qquad\qquad +\vartheta_2\iint_{S_\tau}|\nabla_X\varphi_{R,R_1}|^2 v^2_+e^{\zeta_R}dx dt,
\end{aligned}
\end{equation}
 where $\vartheta_1,\vartheta_2 > 0$ are fixed \emph{universal} constants.

 We will now show that there exists $\tau_0 > 0$, only depending
 on the structural constants appearing in assumption (G), such that
 \begin{equation} \label{eq:keytermnegative}
  Q(x)(\zeta_R)_t+|\nabla_X\zeta_R|^2+\vartheta_1|\mathbf{b}|^2-2
   \operatorname{div}_X(\mathbf{b})\leq 0\quad\text{for a.e.\,$x\in\R^n,\,t\in[0,\tau]$},
 \end{equation}
 for every $\tau\leq \tau_0$. In order to prove \eqref{eq:keytermnegative}, we define
 $$\gamma_{R} := \frac{1}{4}\int_{\rho_0}^R\sqrt{\hat{q}(t)}\,dt.$$
 Note that we can assume $\gamma_R>0$, up to choosing $R$, and hence also $R_1$, sufficiently large. Moreover,
 since the function $\zeta_R$ is differentiable w.r.t.\,$x$ in $\{\mathcal{N}\neq R\}$, inequality
\eqref{eq:keytermnegative} follows if we show that
 $$Q(x)(\zeta_R)_t+|\nabla_X\zeta_R|^2+\vartheta_1|\mathbf{b}|^2-2
   \operatorname{div}_X(\mathbf{b})\leq 0\quad\forall\,\,(x,t)\in\{\mathcal{N}\neq R\}\times[0,\tau].$$
 Now let $(x,t)\in\{\mathcal{N}\neq R\}\times [0,\tau]$, we distinguish two cases.
 \medskip

 \emph{-\,\,Case I:} $\mathcal{N}(x)>R$.  In this case
 we first observe that, by exploiting \eqref{eq:assQgeq}
 (and by taking into account the very definition of $\zeta_R$), we get
 \begin{align*}
    & Q(x)(\zeta_R)_t+|\nabla_X\zeta_R|^2 \\
    & \qquad = -\frac{Q(x)}{(2\tau-t)^2}
   f_R(\mathcal{N}(x))^2
   +\frac{f_R(\mathcal{N}(x))^2}{4(2\tau-t)^2}
   \hat{q}(\mathcal{N}(x))
   |\nabla_X\mathcal{N}(x)|^2
   \\
   & \qquad \leq -\frac{3}{4}|\nabla_X\mathcal{N}(x)|^2\frac{\hat{q}(\mathcal{N}(x))f_R(\mathcal{N}(x))^2}
   {(2\tau-t)^2};
 \end{align*}
 moreover, from \eqref{eq:controllobLontano} (and since $R>2 \rho_0$), we also have
 \begin{align*}
  & \vartheta_1|\mathbf{b}|^2-2
   \operatorname{div}_X(\mathbf{b}) \leq
   C_1\big(|\mathbf{b}|^2+\mathrm{div}_X(\mathbf{b})_-\big)
   \\
   & \qquad\leq C_1\kappa\,|\nabla_X\mathcal{N}(x)|^2\bigg(\int_{\rho_0}^{\mathcal{N}(x)}
   \sqrt{\hat{q}(t)}\,dt
   \bigg)^2\hat{q}(\mathcal{N}(x)) \\
   &\qquad= 16C_1\kappa(2\tau-t)^2\,|\nabla_X\mathcal{N}(x)|^2\frac{\hat{q}(\mathcal{N}(x))f_R(\mathcal{N}(x))^2}
   {(2\tau-t)^2},
 \end{align*}
 for some $C_1 > 0$. Thus, we obtain
 \begin{align*}
   & Q(x)(\zeta_R)_t+|\nabla_X\zeta_R|^2
   +
   \vartheta_1|\mathbf{b}|^2-2
   \operatorname{div}_X(\mathbf{b}) \\
   & \qquad \leq |\nabla_X\mathcal{N}(x)|^2\frac{\hat{q}(\mathcal{N}(x))f_R(\mathcal{N}(x))^2}
   {(2\tau-t)^2}
   \Big(-\frac{3}{4}+16C_1\kappa(2\tau-t)^2\Big) \\[0.05cm]
   & \qquad (\text{recalling that $0\leq t\leq \tau$}) \\[0.05cm]
   & \qquad\leq |\nabla_X\mathcal{N}(x)|^2\frac{\hat{q}(\mathcal{N}(x))f_R(\mathcal{N}(x))^2}
   {(2\tau-t)^2}
   \Big(-\frac{3}{4}+64C_1\kappa\tau^2\Big).
 \end{align*}
 As a consequence, if we choose $\tau_0 > 0$ so small that
 $64 C_1 \kappa\tau_0^2 \leq 3/4$
 (notice that the smallness of $\tau_0$ only depends on $C_1$ and on $\kappa$),
 we conclude that
 $$Q(x)(\zeta_R)_t+|\nabla_X\zeta_R|^2
   +
   \vartheta_1|\mathbf{b}|^2-2
   \operatorname{div}_X(\mathbf{b}) \leq 0,$$
   for every $x\in \Omega$ with $\mathcal{N}(x) > R$ and $0\leq t\leq \tau$,
   provided that $\tau\leq \tau_0$.
   \medskip

   \emph{-\,\,Case II:} $\mathcal{N}(x) < R$.
   In this case, by definition of $\zeta_R$ we infer that
   \begin{equation} \label{eq:estimQzetatnablaCaseII}
    Q(x)(\zeta_R)_t+|\nabla_X\zeta_R|^2 = Q(x)(\zeta_R)_t
   = -\frac{Q(x)}{(2\tau-t)^2}\gamma_R^2.
   \end{equation}
   To estimate the term $\vartheta_1|\mathbf{b}|^2-2
   \operatorname{div}_X(\mathbf{b})$, instead, we
    consider two different sub-ca\-ses, according
   to the growth condition in assumption $(G)$.
   \medskip

   $\mathrm{\emph{-\,\,Case II)}}_a$: $\mathcal{N}(x) >\rho_0$.
   In this case,
   by exploiting once again \eqref{eq:controllobLontano} (and by ta\-king into account the explicit
   definition of $\gamma_{R}$), we get
   \begin{equation} \label{eq:estimbCaseIIsbI}
   \begin{split}
    & \vartheta_1|\textbf{b}|^2-2
   \operatorname{div}_X(\mathbf{b}) \leq
   C_1\kappa\,|\nabla_X\mathcal{N}(x)|^2\Big(\int_{\rho_0}^{\mathcal{N}(x)}\!\!\!
    \sqrt{\hat{q}}\,ds\Big)^2\hat{q}(\mathcal{N}(x)) \\[0.05cm]
   & \qquad (\text{since we are assuming $\mathcal{N}(x) < R$}) \\[0.05cm]
   & \qquad \leq
   16C_1\kappa\,\gamma_{R}^2\,|\nabla_X\mathcal{N}(x)|^2\,\hat{q}(\mathcal{N}(x)) \\
   & \qquad = 16C_1\kappa(2\tau-t)^2\,\gamma_R^2\,|\nabla_X\mathcal{N}(x)|^2
   \frac{\hat{q}(\mathcal{N}(x))}{(2\tau-t)^2}.
   \end{split}
   \end{equation}
   Gathering \eqref{eq:estimQzetatnablaCaseII}-\eqref{eq:estimbCaseIIsbI},
   and using \eqref{eq:assQgeq}, we obtain
   \begin{align*}
    & Q(x)(\zeta_R)_t+|\nabla_X\zeta_R|^2
   +
   \vartheta_1|\textbf{b}|^2-2
   \operatorname{div}_X(\textbf{b}) \\
   & \qquad \leq \gamma_R^2\,|\nabla_X\mathcal{N}(x)|^2\frac{\hat{q}(\mathcal{N}(x))}{(2\tau-t)^2}
   \big(16C_1\kappa(2\tau-t)^2-1\big)
   \\[0.05cm]
   & \qquad (\text{recalling that $0\leq t\leq \tau$}) \\[0.05cm]
   & \qquad\leq \gamma_R^2\,|\nabla_X\mathcal{N}(x)|^2\frac{\hat{q}(\mathcal{N}(x))}{(2\tau-t)^2}
   \big(64C_1\kappa\tau^2-1\big).
   \end{align*}
   As a consequence, if we choose $\tau_0 > 0$ so small that $64C_1\kappa\tau_0^2 \leq 1$
   (notice that, once again, the smallness of $\tau_0$ only
   depends on $C_1$ and $\kappa$), we conclude that
   $$Q(x)(\zeta_R)_t+|\nabla_X\zeta_R|^2
   +
   \vartheta_1|\textbf{b}|^2-2
   \operatorname{div}_X(\textbf{b}) \leq 0,$$
   for every $x\in \R^n$ with $\rho_0<\mathcal{N}(x)<R$ and $0\leq t\leq \tau$,
   provided that $\tau\leq \tau_0$.
   \medskip

    $\mathrm{\emph{-\,\,Case II)}}_b$: $\mathcal{N}(x) \leq \rho_0$.
   In this case, instead, by exploiting
   \eqref{eq:controllobVicino}
   (and by recalling that $R>2\rho_0$), we have the following easier estimate
   \begin{equation} \label{eq:estimbCaseIIsbII}
   \begin{split}
    \vartheta_1|\textbf{b}|^2-2
   \operatorname{div}_X(\textbf{b}) & \leq C_1\kappa 
   Q(x) \\
   & = C_1\kappa'
   \Big(\int_{\rho_0}^{2\rho_0}\sqrt{\hat{q}(t)}\,dt\bigg)^2 Q(x)
   \\
   &  \leq 16C_1\kappa'\gamma_R^2Q(x)\\
   & = 16C_1\kappa'(2\tau-t)^2\gamma_R^2\frac{Q(x)}{(2\tau-t)^2},
   \end{split}
   \end{equation}
   where the constant $\kappa' > 0$ is given by
   $$\kappa' := \kappa\,
   \bigg(\int_{\rho_0}^{2\rho_0}\sqrt{\hat{q}(t)}\,dt\bigg)^{-2}.$$
   We explicitly stress that $\kappa'$ only depends
   on $\kappa,\,\rho_0$ and on the function $q$,
   which are fixed \emph{from the very beginning} in assumption $(G)$.

   Then, gathering \eqref{eq:estimQzetatnablaCaseII} and
   \eqref{eq:estimbCaseIIsbII}, we obtain
   \begin{align*}
    & Q(x)(\zeta_R)_t+|\nabla_X\zeta_R|^2
   +
   \vartheta_1|\textbf{b}|^2-2
   \operatorname{div}_X(\textbf{b}) \\
   & \qquad \leq \gamma_R^2\frac{Q(x)}
   {(2\tau-t)^2}
   \big(16C_1
   \kappa'(2\tau-t)^2-1\big) \\[0.05cm]
   & \qquad (\text{recalling that $0\leq t\leq \tau$}) \\[0.05cm]
   & \qquad\leq \gamma_R^2\frac{Q(x)}
   {(2\tau-t)^2}\big(64C_1\kappa'\tau^2-1\big).
   \end{align*}
   As a consequence, if we choose $\tau_0 > 0$ so small that
   $64C_1\kappa'\tau_0^2 \leq 1$
   (note that the smallness of $\tau_0$ only
   depends on $C_1$ and on the structural constants
   appearing in assumption $(G)$, since the same is true of $\kappa'$), we conclude that
   $$Q(x)(\zeta_R)_t+|\nabla_X\zeta_R|^2
   +
   \vartheta_1|\textbf{b}|^2-2
   \operatorname{div}_X(\textbf{b}) \leq 0,$$
   for every $x\in \R^n$ with $\mathcal{N}(x)\neq R$ and $0\leq t\leq \tau$,
   provided that $\tau\leq \tau_0$.
   \medskip

   Summing up, if we choose $\tau_0 > 0$ so small that
   \begin{equation} \label{eq:choicetauzero}
    64C_1\kappa\tau_0^2 \leq \frac{3}{4}<1\quad\text{and}\quad
   64C_1\kappa'\tau_0^2 \leq 1,
   \end{equation}
   we finally conclude that \eqref{eq:keytermnegative} is satisfied, provided
   that $0\leq\tau\leq\tau_0$.
   With \eqref{eq:keytermnegative} at hand
   we continue with the proof of \eqref{eq:claimcentrale}, assuming from now on that
   $$\text{$0\leq\tau\leq\tau_0$, where $\tau_0$ is as in
   \eqref{eq:choicetauzero}}.$$
   In particular, by combining \eqref{eq:keytermnegative} with
   \eqref{eq:intbypartsmainThm}, we obtain
   \begin{equation} \label{eq:estimdastimare}
    \begin{split}
     &\int_{\R^n} Q(x)\varphi_{R,R_1}^2v_+^2(x,\tau) e^{\zeta_R(x,\tau)}dx -
    \int_{\R^n} Q(x)\varphi_{R,R_1}^2v_+^2(x,0)e^{\zeta_R(x,0)}dx \\
   &\qquad\qquad \leq c\iint_{S_\tau}|\nabla_X\varphi_{R,R_1}|^2 v^2_+e^{\zeta_R}dx dt.
    \end{split}
   \end{equation}
   We then turn to estimate the two sides of \eqref{eq:estimdastimare}.

   As for the left-hand side we observe that,
   owing to the properties of $\phi$ in \eqref{eq:propPhi}
   (and taking into account the
   definition of $\varphi_{R,R_1}$), we immediately get
   $$\text{$\varphi_{R,R_1} \equiv 1$ on $\{x:\,\mathcal{N}(x) \leq R\}$};$$
   from this, since by \eqref{eq:uzeropositive} we have
   $v_+(\cdot,0) = (u_0-1)_+ = 0$ in $\R^n$,
    we obtain
   \begin{equation} \label{eq:estimLHS}
   \begin{split}
    & \int_{\R^n} Q(x)\varphi_{R,R_1}^2v_+^2(x,\tau) e^{\zeta_R(x,\tau)}dx -
    \int_{\R^n} Q(x)\varphi_{R,R_1}^2v_+^2(x,0)e^{\zeta_R(x,0)}dx
    \\
    & \qquad = \int_{\R^n} Q(x)\varphi_{R,R_1}^2v_+^2(x,\tau) e^{\zeta_R(x,\tau)}dx \\
    & \qquad (\text{since $\zeta_R(\cdot,\tau) = -\gamma_R^2/\tau$ on $\{\mathcal{N}\leq R\}$}) \\
    & \qquad \geq e^{-\gamma_R^2/\tau}\int_{\{\mathcal{N}(x)\leq R\}}
    Q(x)\,v_+^2(x,\tau)\,dx.
   \end{split}
   \end{equation}
   As for the right-hand side, instead, we first observe that,
   again by using the proper\-ties of $\phi$ in \eqref{eq:propPhi}, we immediately get
   $$\text{$\nabla_X\varphi_{R,R_1}\neq 0$ only on $\{x:\,R<\mathcal{N}(x)<R_1\}$};$$
   from this, since by \eqref{eq:uzeropositive} we have $-1\leq v(x,t)\leq e^t$,
   we obtain
   \begin{align*}
    & \iint_{S_\tau}|\nabla_X\varphi_{R,R_1}|^2 v^2_+e^{\zeta_R}dx dt  \\
    & \qquad
    = \int_{\R^n}|\nabla_X\varphi_{R,R_1}|^2
    \bigg(\int_0^\tau v_+^2(x,t)\,e^{-f_R(\mathcal{N}(x))^2/(2\tau-t)}\,dt\Big)dx
    \\
    & \qquad
    \leq \tau e^{2\tau}\int_{\{R < \mathcal{N}(x)<R_1\}}
    e^{-f_R(\mathcal{N}(x))^2/(2\tau)}
    |\nabla_X\varphi_{R,R_1}|^2\,dx \\
    & \qquad (\text{recalling that $\varphi_{R,R_1} = \phi\circ\mathcal{N}$}) \\
    & \qquad = \tau e^{2\tau}\int_{\{R < \mathcal{N}(x)<R_1\}}
    e^{-f_R(\mathcal{N}(x))^2/(2\tau)}
    |\phi'(\mathcal{N}(x))|^2|\nabla_X\mathcal{N}(x)|^2\,dx
   =: (\bigstar).
   \end{align*}
   In view of the `$\mathcal{N}$\,-\,radial' expression of the last integral
   in the above estimate, we then use Federer's Coarea formula, obtaining
   \begin{align*}
    (\bigstar) & = \tau e^{2\tau}
    \int_{R}^{R_1}e^{-f_{R}(r)^2/(2\tau)}
  ( \phi'(r))^2\Big(\int_{\{x:\mathcal{N}(x) = r\}}
  \frac{|\nabla_X\mathcal{N}|^2}{|\nabla\mathcal{N}|}\,
  d\mathcal{H}^{n-1}\Big)\,dr \\
  & (\text{recalling the definition of $\mathcal{S}(r)$ in \eqref{eq:defFactorS}}) \\
  & =  \tau e^{2\tau}
    \int_{R}^{R_1}\mathcal{S}(r)\,e^{-f_{R}(r)^2/(2\tau)}
  ( \phi'(r))^2\,dr=: (2\bigstar).
   \end{align*}
  Summing up, we have proved the estimate
  \begin{equation} \label{eq:wherechoosephi}
   \iint_{S_\tau}|\nabla_X\varphi_{R,R_1}|^2 v^2_+e^{\zeta_R}dx dt \leq
   \tau e^{2\tau}
    \int_{R}^{R_1}\mathcal{S}(r)\,e^{-f_{R}(r)^2/(2\tau)}
  ( \phi'(r))^2\,dr,
  \end{equation}
  holding true \emph{for every $\phi\in\mathrm{Lip}([0,+\infty)$} satisfying the properties
  in \eqref{eq:propPhi}.
  To proceed further, we now consider the following specific choice of $\phi$
   $$\phi(r) := -\bigg(\int_{R}^{R_1}
   \frac{e^{f_{R}(z)^2/(2\tau)}}{\mathcal{S}(z)}\,dz\bigg)^{-1}
   \,\int_{R_1}^{r}\frac{e^{f_{R}(z)^2/(2\tau)}}{\mathcal{S}(z)}\,dz$$
   for every $r\in[R,R_1]$, $\phi\equiv0$ in $[R_1,+\infty)$ and $\phi\equiv 1$ in $[0,R]$.

  Clearly, $\phi\in \mathrm{Lip}([0,+\infty))$ and $\phi$ obviously satisfies
  the properties in \eqref{eq:propPhi}; hence, we are entitled to exploit estimate \eqref{eq:wherechoosephi}
  with \emph{this choice of $\phi$}, getting
   \begin{equation} \label{eq:estimRHS}
   \begin{split}
    & \iint_{S_\tau}|\nabla\varphi_{R,R_1}|^2 v^2_+e^{\zeta_R}dx dt
  \leq \tau e^{2\tau}
   \bigg(\int_{R}^{R_1}
   \frac{e^{f_{R}(r)^2/(2\tau)}}{\mathcal{S}(r)}\,dr\bigg)^{-1}.
   \end{split}
   \end{equation}
   All in all, by combining \eqref{eq:estimLHS}\,-\,\eqref{eq:estimRHS} with
   \eqref{eq:estimdastimare} (and by taking into account the
   \emph{explicit e\-xpres\-sion} of $f_{R}(r)$ when $r > R$), we
   obtain
   \begin{equation}  \label{eq:estimToConcludeFin}
   \begin{split}
    & \int_{\{\mathcal{N}(x)\leq R\}}
    Q(x)\,v_+^2(x,\tau)\,dx \\
   & \qquad = c(\tau,R)
   \bigg(\int_{R}^{R_1}
   \frac{1}{\mathcal{S}(r)}
    \exp\bigg\{\frac{1}{32\tau}\Big(\int_{\rho_0}^{r}\sqrt{\hat{q}(t)}\,dt\Big)^2\bigg\}
   \,dr\bigg)^{-1},
  \end{split}
  \end{equation}
  where $c(\tau,R) > 0$ is
  explicitly given by
  $$c(\tau) = c\tau e^{2\tau+\gamma_R^2/\tau}.$$
  With \eqref{eq:estimToConcludeFin} at hand, we finally
  come to the end of the proof. First of all we observe that,
  if $\Lambda > 0$ is as in \eqref{eq:mainAssumptionIntegral},
  by possibly shrinking $\tau_0 > 0$ we may assume that
  $$\frac{1}{32\tau}\geq \frac{1}{32\tau_0}\geq \Lambda$$
  (note that this further shrinking again depends only on \emph{structural constants});
  as a consequence, from estimate \eqref{eq:estimToConcludeFin}  we derive that
  \begin{equation} \label{eq:topasslimite1}
   \begin{split}
   & \int_{\{\mathcal{N}(x)\leq R\}}
    Q(x)\,v_+^2(x,\tau)\,dx \\
    & \qquad \leq c(\tau,R)
   \bigg(\int_{R}^{R_1}
   \frac{1}{\mathcal{S}(r)}
    \exp\bigg\{\Lambda\Big(\int_{\rho_0}^{r}\sqrt{\hat{q}(t)}\,dt\Big)^2\bigg\}
   \,dr\bigg)^{-1}.
  \end{split}
  \end{equation}
  Now, recalling that $R_1>R$ was arbitrarily fixed,
  by taking the limit as $R_1\to +\infty$ in the above
  \eqref{eq:topasslimite1} and by using
  assumption \eqref{eq:mainAssumptionIntegral}, we get
  \begin{equation*}
   \int_{\{\mathcal{N}(x)\leq R\}}
    Q(x)\,v_+^2(x,\tau)\,dx\leq0,
  \end{equation*}
  from which we readily derive that
  $$v(x,\tau)\leq 0\quad\text{for every $x\in\mathrm{supp}(Q)$ with $\mathcal{N}(x)\leq R,\,
  0\leq\tau\leq\tau_0$}.$$
  Then, recalling that also $R>2\rho_0$ was arbitrarily fixed, we
  obtain
  $$v(x,\tau)\leq 0\quad\forall\,\,x\in\mathrm{supp}(Q),\,0\leq\tau\leq\tau_0.$$
  Now, by repeating the \emph{very same argument} exploited so far,
  but integrating on the strip $\R^n\times [\tau_0,\tau_0+\tau]$
  and considering the function
  $$\zeta_R(x,t) := -\frac{1}{2\tau-t+\tau_0}f_R(\mathcal{N}(x))^2,$$
  we can plainly recognize that $v\leq 0$ on $\mathrm{supp}(Q)\times[\tau_0,2\tau_0]$,
  and hence
  $$v(x,\tau)\leq 0\quad\forall\,\,x\in\mathrm{supp}(Q),\,0\leq\tau\leq 2\tau_0.$$
  By iterating this argument, and by exploiting in a crucial way
  the fact that $\tau_0 > 0$
  is a \emph{universal number remaining unchanged at any iteration}, we conclude that
  $$v(x,\tau)\leq 0\quad\forall\,\,x\in\mathrm{supp}(Q),\,\tau \geq 0.$$
  This is precisely the desired \eqref{eq:claimcentrale}, and the proof is finally complete.
  \end{proof}
\section{The case of Carnot groups} \label{sec:CarnotGroups}
 As anticipated in the Introduction, one of the main examples of smooth vector 
 fields satisfying our assumptions $(H1)$-$(H2)$ is given by
 the \emph{generators of the hori\-zon\-tal layer} of a Carnot group 
 (see Appendix \ref{sec:Appendix} for the relevant definitions and few basic facts).
 In this section we rephrase our main Theorem \ref{thm:MainThm} in this context,
 where it takes a simpler form (due to the possibility
 of explicitly computing $\mathcal{S}(r)$).
 
 Let then $\mathbb{G} = (\R^n,*,\delta_\lambda)$ be a Carnot group on $\R^n$, with
 $1\leq m \leq n$ generators; moreover, let $V_1(\G)$
 be the horizontal layer of $\mathrm{Lie}(\G)$, and let
 $$X = \{X_1,\ldots,X_m\}$$
 be a (linear) basis of $V_1(\G)$. Since we have recognized in the Appendix
 that the set $X$ satisfies \emph{both} the assumptions $(H1)$ and $(H2)$ stated
 in the Introduction, we are entitled
 to apply Theorem \ref{thm:MainThm} to the operator 
 $$\LL = \sum_{i = 1}^mX_i^2+\sum_{i = 1}^m b_i(x)X_i-Q(x),$$
 provided that $\mathbf{b} = (b_1,\ldots,b_m)
 \in C^\infty(\R^n;\R^m)$ and $Q\in C^\infty(\R^n)$
 satisfy assumptions $(S)$\,-\,$(G)$ in the statement 
 of this theorem, with respect to a 
 suitable \emph{exhaustion function} $\mathcal{N}:\R^n\to[0,+\infty)$; thus, if we assume that
 $$\int_{\rho_0}^{+\infty}\frac{1}{\mathcal{S}(t)}\exp\Big\{\Lambda
   \Big(\int_{\rho_0}^r\sqrt{\hat{q}(s)}\,ds\Big)^2\Big\}\,dt = +\infty$$
   (here, $\Lambda,\rho_0 >0$ and $\hat{q}$ are as in assumptions $(S)$\,-\,$(G)$),
 we can conclude that $u\equiv 0$ is the unique bounded solution of the equation $\LL u = 0$.
 
 We now aim at showing that, \emph{in the present context}, it is possible to choose
 the exhaustion function $\mathcal{N}$ in such a way that the `geometric' factor
 $$\mathcal{S}(r) = \int_{\{\mathcal{N}(x) = r\}}
 \frac{|\nabla_X\mathcal{N}|^2}{|\nabla\mathcal{N}|}\,d\mathcal{H}^{n-1}$$
 can be \emph{explicitly computed}.
 Indeed, if we consider the operator
 $$\textstyle\Delta_\G = \sum_{i = 1}^m X_i^2$$
 (which is usually referred to as a \emph{sub-Laplacian on $\G$}), we know
 from a deep
 result by Folland \cite{Fo2}  that
 there exists a function
 $$\mathcal{N}_\G:\R^n\to\R$$
 which satisfies the following properties:
  \vspace*{0.1cm}
  
 \begin{compactenum}[i)]
 \item $\mathcal{N}_\G\in C^\infty(\R^n\setminus\{0\})$;
 \vspace*{0.1cm}
 
 \item $\mathcal{N}_\G$ is a \emph{$\delta_\lambda$-homogeneous norm on $\G$}, that is,
 \vspace*{0.1cm}
 
 \begin{compactenum}
  \item $\mathcal{N}_\G\geq 0$ on $\R^n$, and $\mathcal{N}_\G(x) = 0\Leftrightarrow x = 0$;
  \vspace*{0.1cm}
  
  \item for every $x\in\R^n$ and $\lambda > 0$, we have
   $\mathcal{N}_\G(\delta_\lambda(x)) = \lambda\,\mathcal{N}_\G(x)$;
 \end{compactenum}
 \vspace*{0.1cm}
 
 \item denoting by $D\geq n$ the \emph{$\delta_\lambda$-homogeneous dimension of $\G$}, the map
 $$\Gamma(x) = \mathcal{N}_{\G}^{2-D}(x)\qquad (\text{with $x\neq 0$})$$
 is the \emph{unique, global fundamental solution} of $\Delta_\G$, that is,
 \vspace*{0.1cm}
 
 \begin{compactenum}
  \item $\Gamma\in C^\infty(\R^n\setminus\{0\})$;
  \vspace*{0.1cm}
  
  \item $\Gamma\in L^1_{\mathrm{loc}}(\R^n)$ and $\Gamma(x)\to 0$ as $x\to+\infty$;
  \vspace*{0.1cm}
  
  \item for every $\varphi\in C_0^\infty(\R^n)$, we have
   $$\int_{\R^n}\Gamma\,\Delta_\G\varphi\,dx = -\varphi(0).$$
 \end{compactenum}
 \end{compactenum}
 \vspace*{0.1cm}
 
 \noindent Clearly, the function $\mathcal{N}_{\G}$ is an exhaustion function on $\R^n$;
 moreover, owing to pro\-per\-ty ii), it is possible to prove the following
 \emph{surface mean value formula} for $\Delta_\G$ (see, e.g., \cite[Theorem 5.5.4]{BLUlibro}):
 \emph{if $u\in C^\infty(\R^n)$ is such that $\Delta_\G u = 0$ in $\R^n$,
 then for every $x\in\R^n$ and every $r > 0$ we have}
 \begin{equation}\label{eq:MVFGeneral}
  u(x) = \frac{\beta_D}{r^{D-1}}\int_{\{\mathcal{N}_\G(y) = r\}}
 \frac{|\nabla_X\mathcal{N}_\G|^2(x^{-1}*y)}
 {|\nabla_y\mathcal{N}_\G(x^{-1}*y)|}\,d\mathcal{H}^{n-1}(y),
 \end{equation}
 where $\beta_D > 0$ is a constant only depending on $D$. Choosing, in particular,
 $u\equiv 1$ and $x = 0$
 (which is the neutral element of $\G$), from 
 \eqref{eq:MVFGeneral} we then get
 $$\frac{\beta_D}{r^{D-1}}\int_{\{\mathcal{N}_\G(y) = r\}}
 \frac{|\nabla_X\mathcal{N}_\G|^2(y)}
 {|\nabla\mathcal{N}_\G(y)|}\,d\mathcal{H}^{n-1}(y) = 1.$$
 As a consequence, we obtain
 \begin{equation} \label{eq:ExplicitSr}
  \mathcal{S}(r) = \int_{\{\mathcal{N}_\G = r\}}
 \frac{|\nabla_X\mathcal{N}_\G|^2}{|\nabla\mathcal{N}_\G|}\,d\mathcal{H}^{n-1}
 = \beta_D r^{D-1}\quad\text{for every $r > 0$}.
 \end{equation}
 In view of \eqref{eq:ExplicitSr}, if we apply
 Theorem \ref{thm:MainThm} with the choice $\mathcal{N} = \mathcal{N}_\G$,
 we then plainly derive
 the following result in the context of \emph{Carnot groups}.
 \begin{theorem} \label{thm:MainCG}
   Let $\G = (\R^n,*,\delta_\lambda)$ be a Carnot group on $\R^n$,
   with homogeneous dimension $D\geq n$ and $1\leq m\leq n$ generators. Moreover, let $V_1(\G)$
   be the first layer of $\G$, and let
   $X = \{X_1,\ldots,X_m\}$
   be a linear basis of $V_1(\G)$. Setting $\Delta_\G = \sum_{i = 1}^m X_i^2$,
   we denote by $\mathcal{N}_\G$ the unique homogeneous norm on $\R^n$ such that
   $$\Gamma(x) = \mathcal{N}_\G^{2-D}(x)\qquad(x\neq 0)$$
   is the global fundamental solution of $\Delta_\G$, see Folland \cite{Fo2}. Finally, let
   $$\mathbf{b} = (b_1,\ldots,b_m)\in C^\infty(\R^n;\R^m),\qquad Q\in C^\infty(\R^n)$$
   be smooth functions satisfying Assumptions $(S)$ and $(G)$ in the statement 
   of Theo\-rem \ref{thm:MainThm} 
   \emph{(}for some $\kappa,\rho_0 > 0$ and $\hat{q}:(\rho_0,+\infty)\to[0,+\infty)$,
   with respect to $\mathcal{N}_\G$\emph{)}.
   
   Then, if there exists $\Lambda > 0$ such that
   \begin{equation} \label{eq:mainAssumptionIntegralCG}
   \int_{\rho_0}^{+\infty}\frac{1}{r^{D-1}}\exp\Big\{\Lambda
   \Big(\int_{\rho_0}^r\sqrt{\hat{q}(s)}\,ds\Big)^2\Big\}\,dr = +\infty,
 \end{equation}
 the Liouville property \eqref{eq:LPprop} holds for the equation
 $$\textstyle \LL u = \Delta_\G u + \sum_{i = 1}^mb_i(x)X_iu-Q(x)u = 0.$$
 \end{theorem}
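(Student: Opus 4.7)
The plan is to show that Theorem \ref{thm:MainCG} is a direct corollary of the general Theorem \ref{thm:MainThm} once one makes the natural choice $\mathcal{N} = \mathcal{N}_\G$, Folland's homogeneous norm. The strategy reduces to three verifications: that the structural hypotheses $(H1)$ and $(H2)$ are met by the basis $X = \{X_1,\ldots,X_m\}$ of $V_1(\G)$; that $\mathcal{N}_\G$ is an admissible exhaustion function; and that the geometric factor $\mathcal{S}(r)$ associated with $\mathcal{N}_\G$ reduces to $\beta_D\,r^{D-1}$, so that the integral condition \eqref{eq:mainAssumptionIntegral} collapses to \eqref{eq:mainAssumptionIntegralCG}.

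First I would invoke the material recalled in the Appendix. The generators of the first layer of a Carnot group are, by definition, homogeneous of degree $1$ with respect to the intrinsic dilations $\delta_\lambda$, giving $(H1)$, and they bracket-generate $\mathrm{Lie}(\G)$ at the origin (in fact at every point, by left-invariance), which yields $(H2)$. Next, $\mathcal{N}_\G$ is smooth away from $0$, strictly positive there, and $\delta_\lambda$-homogeneous of degree $1$, so the sublevel sets $\{\mathcal{N}_\G \le r\}$ are compact and $\mathcal{N}_\G$ qualifies as an exhaustion function which is $C^1$ outside the compact set $\{0\}$. Hypotheses $(S)$ and $(G)$ are assumed verbatim in the statement, with respect to this very $\mathcal{N}_\G$.

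The heart of the argument, and the step where something actually needs to be computed, is the identity \eqref{eq:ExplicitSr}. This is essentially free from the surface mean value formula \eqref{eq:MVFGeneral} for harmonic functions of $\Delta_\G$, which in turn follows from Folland's characterization of the fundamental solution $\Gamma = \mathcal{N}_\G^{2-D}$ via a Green-type representation (see, e.g., Theorem 5.5.4 of the Bonfiglioli-Lanconelli-Uguzzoni monograph already cited in the paper). Applying \eqref{eq:MVFGeneral} to the $\Delta_\G$-harmonic function $u \equiv 1$ at the neutral element $x = 0$ (so that left-translation is trivial) yields
\[
1 \;=\; \frac{\beta_D}{r^{D-1}}\int_{\{\mathcal{N}_\G(y)=r\}}
\frac{|\nabla_X \mathcal{N}_\G|^2(y)}{|\nabla \mathcal{N}_\G(y)|}\,d\mathcal{H}^{n-1}(y),
\]
which is exactly $\mathcal{S}(r) = \beta_D\,r^{D-1}$.

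With this identity in hand, substituting $\mathcal{S}(r) = \beta_D\,r^{D-1}$ into \eqref{eq:mainAssumptionIntegral} shows that \eqref{eq:mainAssumptionIntegralCG} is equivalent to the integral divergence condition required by Theorem \ref{thm:MainThm} (the constant $\beta_D$ being harmless, since it only rescales $\Lambda$). Thus Theorem \ref{thm:MainThm} applies and delivers \eqref{eq:LPprop} for $\LL$. The main obstacle is not technical but bookkeeping: one must recognize that all the analytic heavy lifting is already done in Theorem \ref{thm:MainThm}, and that the role of the Carnot group structure is solely to furnish a canonical exhaustion function for which $\mathcal{S}(r)$ is given by a pure power of $r$.
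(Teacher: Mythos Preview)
Your proposal is correct and follows essentially the same route as the paper: verify $(H1)$--$(H2)$ for the horizontal generators, take $\mathcal{N}=\mathcal{N}_\G$ as the exhaustion function, apply the surface mean value formula \eqref{eq:MVFGeneral} to $u\equiv 1$ at $x=0$ to obtain $\mathcal{S}(r)=\beta_D\,r^{D-1}$, and then invoke Theorem~\ref{thm:MainThm}. The paper's argument is identical in substance and in the key computation.
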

\section{Examples}\label{examples}
 In this section we provide some examples
 of application of our main result, that is, Theorem \ref{thm:MainThm}; in order to be
 as concrete as possible, most of these examples will be given in the context
 of the \emph{Carnot groups}, where Theorem \ref{thm:MainThm} takes the more
 explicit form given in Theorem \ref{thm:MainCG}.
 \begin{example} \label{exm:HeisenbergSenzab}
  For a fixed $m\in\mathbb{N}$, 
  let $\mathbb{H}^m$ be the \emph{Heisenberg-Weyl group} on $\R^{2m+1}$: this is
  the Carnot group $\mathbb{H}^m = (\R^{2m+1},*,\delta_\lambda)$ whose
  group opera\-tion $*$ and fa\-mi\-ly of dilations $\{\delta_\lambda\}_\lambda$
  are defined as follows (here and throughout, we de\-no\-te
  the points $z\in\R^{2m+1}$ by $z = (x,y,t)$, with $x,y\in\R^{m}$ and $t\in\R$):
  \begin{align*}
   \mathrm{i)}\,\,&(x,y,t)*(x',y',t') = \Big(x+x',y+y',t+t'+
   \frac{1}{2}\big(\langle y,x'\rangle-\langle x,y'\rangle\big)\Big)\\
   \mathrm{ii)}\,\,&\delta_\lambda(x,y,t) = (\lambda x,\lambda y, \lambda^2 t)
   \quad(\text{hence, $\sigma_1=\cdots=\sigma_{2m} = 1,\,\sigma_{2m+1} = 2$}).
  \end{align*}
  Owing to the above i)\,-\,ii), and taking into account Proposition \ref{prop.LieAlgG},
  it is not difficult to recognize that the \emph{horizontal layer} $V_1(\mathbb{H}^m)$
  of $\mathbb{H}^m$ is spanned by the ($2m$, linearly independent) smooth vector fields
  \begin{equation*}
   \begin{gathered}
   X_i = \de_{x_i}+\frac{1}{2}y_i\de_{t},\qquad Y_i = \de_{y_i}-\frac{1}{2}x_i\de_t
   \qquad (1\leq i\leq m),
   \end{gathered}
  \end{equation*}
  and thus the set $X = \{X_i,Y_i:\,1\leq i\leq m\}$ is a 
  (linear) basis of $V_1(\mathbb{H}^m)$;
  further\-mo\-re, 
  the \emph{$\delta_\lambda$-homogeneous dimension of $\mathbb{H}^m$} is 
  $$D = \textstyle\sum_{i = 1}^{2m+1}\sigma_i = 2m+2.$$
  As in Section \ref{sec:CarnotGroups}, we then set
  $$\textstyle\Delta_{\mathbb{H}^m} = \sum_{i = 1}^m (X_i^2+Y_i^2),$$
  and we denote by
  $\mathcal{N}_{\mathbb{H}^m}$ the unique homogeneous norm on $\R^{2m+1}$ such that 
  $$\Gamma = \mathcal{N}_{\mathbb{H}^m}^{-2m}$$ is the global fundamental solution of
  $\Delta_{\mathbb{H}^m}$ (see Theorem \ref{thm:MainCG}). 
  
  A remarkable fact is that,
  \emph{in this particular
  case},
  such a norm can be \emph{explicitly computed} (see, e.g., \cite[Chapter 5]{BLUlibro}): in fact, we have  
  \begin{equation} \label{eq:explicitNHm}
   \mathcal{N}_{\mathbb{H}^m}(x,y,t) = \mathbf{c}_m\big((|x|^2+|y|^2)^2+16t^2\big)^{1/4},
  \end{equation}
  for a suitable constant $\mathbf{c}_m > 0$
  only depending on $m$. Thus, we also have
  \begin{equation} \label{eq:explicitPsiHm}
   |\nabla_X\mathcal{N}_{\mathbb{H}^m}(x,y,t)|^2 = \mathbf{c}_m^2\cdot\frac{|x|^2+|y|^2}
  {\sqrt{(|x|^2+|y|^2)^2+16t^2}}.
  \end{equation}
  All that being said, we fix $\alpha\leq 2$ and we
  consider the equation
  \begin{equation} \label{eq:SchEqExm1}
   \LL u = \Delta_{\mathbb{H}^m}u-Q_\alpha(x) u = 0
  \qquad\text{on $\mathbb{H}^m \equiv \R^{2m+1}$},
  \end{equation}
  where $Q_\alpha:\R^{2m+1}\to\R$ is a function satisfying the following properties:
  \vspace*{0.1cm}
  
  \begin{compactenum}
   \item $Q_\alpha\in C^\infty(\R^{2m+1}),\,Q_\alpha\geq 0$ on $\R^{2m+1}$;
   \vspace*{0.05cm}
   
   \item setting $\mathcal{O} = \{\mathcal{N}_{\mathbb{H}^m}>1\}\subseteq\R^{2m+1}$, we have
  \begin{equation*} 
   Q_\alpha(z) \geq |\nabla_X\mathcal{N}_{\mathbb{H}^m}(z)|^2
   \mathcal{N}_{\mathbb{H}^m}^{-\alpha}(z)\quad\text{on $\mathcal{O}$}
  \end{equation*}
  (where $\mathcal{N}_{\mathbb{H}^m}$ as in \eqref{eq:explicitNHm}).
  \end{compactenum}
  \vspace*{0.1cm}
  
   Clearly, equation \eqref{eq:SchEqExm1} takes the form
  \eqref{eq:MainPDE}, with 
  $$\text{$\mathbf{b}\equiv 0$ \quad and\quad  $Q \equiv Q_\alpha$ as above};$$
  we then turn to show that \emph{all the assumptions}
  of Theorem \ref{thm:MainCG} are satisfied, so that
  equation \eqref{eq:SchEqExm1} \emph{does not possess}
  non-trivial bounded solutions.
  \medskip

  \noindent -\,\,\textsc{Assumption $(S)$}.
  Owing to the properties (1)\,-\,(2) of $Q_\alpha$ listed above, 
  and taking into account \eqref{eq:explicitNHm}\,-\,\eqref{eq:explicitPsiHm},
   we immediately see that
  Assumption $(S)$ is satisfied.
  \medskip

  \noindent-\,\,\textsc{Assumption $(G)$}.
  First of all, owing once again to property (2) of $Q_\alpha$, we easily see
  that
  estimate \eqref{eq:assQgeq} in Assumption $(G)$-(i) is  satisfied with the choice
  \begin{equation} \label{eq:choicerhoqExm1}
   \mathcal{N} = \mathcal{N}_{\mathbb{H}^m},\quad \rho_0 := 1\quad\text{and}\quad
  \hat{q}(t) = t^{-\alpha}\,\,(t> \rho_0).
  \end{equation}
  Moreover, since $\mathbf{b}\equiv 0$ and $Q_\alpha \geq 0$ pointwise in $\R^{2m+1}$,
  we immediately recogni\-ze that also
  estimate \eqref{eq:controllobLontano} in Assumption $(G)$-(i)
  and Assumption $(G)$-(ii) are satisfied
  (choosing, e.g., $\kappa = 1$).
  \medskip

  Now that we have proved the validity of the \emph{structural assumptions}
  $(S)$\,-\,$(G)$, we turn to prove the validity of
  assumption \eqref{eq:mainAssumptionIntegralCG}: in view of \eqref{eq:choicerhoqExm1},
  and since in this context we have $D = 2m+2$,
  we need to show that there exists $\Lambda > 0$ such that
  \begin{equation} \label{eq:toproveIntegralHm}
   \int_1^{+\infty}
   \frac{1}{r^{2m+1}}\exp\bigg\{\Lambda\Big(\int_1^{r}t^{-\alpha/2}\,dt\Big)^2\bigg\}\,dr = +\infty;
  \end{equation}
   on the other hand, since we are assuming that $\alpha\leq 2$, it is easy
   to check that the above \eqref{eq:toproveIntegralHm}
   is trivially satisfied by choosing $\Lambda = 1$.
 \end{example}
 \begin{example} \label{exm:HmWithb}
  For a fixed $m\in\mathbb{N}$, we let
  $\mathbb{H}^m = (\R^{2m+1},*,\delta_\lambda)$ be
  the Heisenberg-We\-yl gro\-up 
  of order $m$
  introduced in Example \ref{exm:HeisenbergSenzab} (to which we refer for
   \emph{the no\-tation concerning this group}). 
   In particular, denoting the points $z\in\R^{2m+1}$ by
   $$\text{$z = (x,y,t)$, with $x,y\in\R^m$ and $t\in\R$},$$
   we know that the set
   $$X = \Big\{X_i = \de_{x_i}+\frac{y_i}{2}\de_t,\,Y_i = \de_{y_i}-\frac{x_i}{2}\de_t:\,1\leq i\leq m\Big\}$$
   is a basis for $V_1(\mathbb{H}^m)$. 
   We then consider the equation
  \begin{equation} \label{eq:SchordHeisenwithb}
  \begin{split}
   \LL u & = \Delta_{\mathbb{H}^m}u+\langle\mathbf{b}_\beta(z),\nabla_X u\rangle-Q_\alpha(z) u  = 0 \\
   & = \sum_{i = 1}^m (X_i^2+Y_i^2)u + \langle\mathbf{b}_\beta(z),\nabla_X u\rangle-Q_\alpha(z) u  = 0
   \quad \text{on $\mathbb{H}^m = \R^{2m+1}$},
   \end{split}
  \end{equation}
  where $\alpha\leq 2$ and $\alpha-1\leq \beta\leq 2m$ are \emph{fixed} numbers (recall that the
  $\delta_\lambda$\--ho\-mo\-ge\-neous di\-men\-sion of $\mathbb{H}^m$
  is $D = 2m+1$, see Example \ref{exm:HeisenbergSenzab}), 
  while the functions
  $$\mathbf{b}_\beta:\R^{2m+1}\to \R^{2m},\qquad Q_\alpha:\R^{2m+1}\to\R$$ 
  satisfy the following properties (with $\mathcal{N}_{\mathbb{H}^m}$ as in \eqref{eq:explicitNHm}):
  \vspace*{0.1cm}
  
  \begin{compactenum}
   \item[(1)] $Q_\alpha\in C^\infty(\R^{2m+1}),\,Q_\alpha\geq 0$ on $\R^{2m+1}$;
   \vspace*{0.05cm}
   
   \item[(2)] setting $\mathcal{O} = \{\mathcal{N}_{\mathbb{H}^m}>1\}\subseteq\R^{2m+1}$, we have
  \begin{equation} \label{eq:choicebQexm2}
   Q_\alpha(z) = (1+|\nabla_X\mathcal{N}_{\mathbb{H}^m}|^2)
   \mathcal{N}_{\mathbb{H}^m}^{-\alpha}(z)\quad\text{on $\mathcal{O}$};
  \end{equation}
  \item[(1')] $\mathbf{b}_\beta\in C^\infty(\R^{2m+1};\R^{2m})$;
  \item[(2')] $\mathbf{b}_\beta\equiv 0$ on $\{\mathcal{N}_{\mathbb{H}^m}\leq 1\}\subseteq\R^{2m+1}$ and
  \begin{equation} \label{eq:choicebQexm2BIS}
   \mathbf{b}_\beta(z) = \mathcal{N}_{\mathbb{H}^m}^{-\beta}(z)\cdot\nabla_X\mathcal{N}_{\mathbb{H}^m}
   \quad\text{on $\mathcal{O}' = \{\mathcal{N}_{\mathbb{H}^m} > 2\}$}
  \end{equation}
  \end{compactenum}
  \medskip
  
  Clearly,
  equation \eqref{eq:SchordHeisenwithb} takes the form
  \eqref{eq:MainPDE}, with 
  $$\text{$\mathbf{b}\equiv \mathbf{b}_\beta$ \quad and \quad  $Q \equiv Q_\alpha$ as above};$$
  we now turn to show that \emph{all the assumptions}
  of Theorem \ref{thm:MainCG} are satisfied, so that
  equation \eqref{eq:SchordHeisenwithb} \emph{does not possess}
  non-trivial bounded solutions.
  \medskip

  \noindent -\,\,\textsc{Assumption $(S)$}.
  Owing to the properties (1)\,-\,(2) of $Q_\alpha$ listed above, 
  and taking into account \eqref{eq:explicitNHm}\,-\,\eqref{eq:explicitPsiHm},
   we immediately see that
  Assumption $(S)$ is satisfied.
  \medskip

  \noindent-\,\,\textsc{Assumption $(G)$}.
  First of all we observe that, by \eqref{eq:choicebQexm2}, we have
  $$Q_\alpha(z)\geq |\nabla_X\mathcal{N}_{\mathbb{H}^m}|^2
   \mathcal{N}_{\mathbb{H}^m}^{-\alpha}(z)\quad\text{on $\mathcal{O}$};$$
  from this, we immediately derive that
  estimate \eqref{eq:assQgeq} in Assumption $(G)$-(i) is sati\-sfied 
  \emph{for every $\rho_0\geq 1$}, and with the choice (analogous to that in Example
  \ref{exm:GrushinSenzab})
  \begin{equation} \label{eq:choicerhoqExm2}
   \mathcal{N} = \mathcal{N}_{\mathbb{H}^m},\quad 
  \hat{q}(t) = t^{-\alpha}\,\,(t> \rho_0).
  \end{equation}
 Taking into account \eqref{eq:choicebQexm2BIS}, we then \emph{choose} $\rho_0 = 2$
 and we turn to prove the validity of the remaining estimates
 \eqref{eq:controllobLontano}\,-\,\eqref{eq:controllobVicino} in assumption $(G)$
 (note that in Example \ref{exm:HeisenbergSenzab} we have tacitly
 chosen $\rho_0 = 1$ for simplicity, since in that case $\mathbf{b}\equiv 0$).
 \vspace*{0.1cm}
 
  As regards the validity of estimate \eqref{eq:controllobLontano} we first observe that,
  on account of \eqref{eq:choicebQexm2BIS} (and using \cite[Prop.\,5.4.3]{BLUlibro}
  for the computation of $\Delta_{\mathbb{H}^m}(\mathcal{N}_{\mathbb{H}^m})$), we have
  \begin{align*}
   \mathrm{div}_X(\mathbf{b}_\beta) & = -\beta\mathcal{N}_{\mathbb{H}^m}^{-\beta-1}
   |\nabla_X\mathcal{N}_{\mathbb{H}^m}|^2+\mathcal{N}_{\mathbb{H}^m}^{-\beta}\,
   \mathrm{div}_X(\nabla_X\mathcal{N}_{\mathbb{H}^m}) \\
   & = -\beta\mathcal{N}_{\mathbb{H}^m}^{-\beta-1}
   |\nabla_X\mathcal{N}_{\mathbb{H}^m}|^2+\mathcal{N}_{\mathbb{H}^m}^{-\beta}\,
   \Delta_{\mathbb{H}^m}(\mathcal{N}_{\mathbb{H}^m}) \\
   & = \mathcal{N}_{\mathbb{H}^m}^{-\beta-1}
   |\nabla_X\mathcal{N}_{\mathbb{H}^m}|^2(2m-\beta)\quad\forall\,\,z\in\mathcal{O}';
  \end{align*}
  in particular, since we are assuming $\beta\leq 2m$, we see that
  $$\text{$\mathrm{div}_X(\mathbf{b}_\beta)\geq 0$ on $\mathcal{O}'=
  \{\mathcal{N}_{\mathbb{H}^m} > 2\}$}.$$
  On account of this fact, and recalling
  \eqref{eq:choicebQexm2BIS}\,-\,\eqref{eq:choicerhoqExm2}, to prove the validity of 
  estimate \eqref{eq:controllobLontano} we need to show that there exists some $\kappa_1 > 0$ such that
  \begin{equation} \label{eq:toproveGiiExm2r}
   \rho^{-2\beta} \leq \kappa_1
  \bigg(\int_{2}^{\rho}t^{-\alpha/2}\,dt\bigg)^2\rho^{-\alpha}
  \quad\text{for every $\rho >  2$}.
  \end{equation}
  Since we are assuming $\alpha\leq 2$, to prove
  \eqref{eq:toproveGiiExm2r} we distinguish two cases.
  \begin{itemize}
   \item[(1)] $\alpha < 2$. In this first case, a direct computation gives
   $$\bigg(\int_{2}^{\rho}t^{-\alpha/2}\,dt\bigg)^2
   \sim  \eta_\alpha \rho^{2-\alpha}\quad \text{as $\rho\to +\infty$},$$
   where $\eta_\alpha > 0$ is a constant only depending on $\alpha$;
   as a consequence, taking into account that
   $\beta\geq\alpha-1$, we readily obtain
   \begin{align*}
    & \rho^{2\beta}\cdot\bigg\{\bigg(\int_{2}^{\rho}t^{-\alpha/2}\,dt\bigg)^2\rho^{-\alpha}
    \bigg\}\sim \eta_\alpha \rho^{2(1-\alpha+\beta)}\to \ell^* \in(0,+\infty]
   \end{align*}
   as $\rho\to +\infty$, and this immediately implies the desired \eqref{eq:toproveGiiExm2r}.
   \vspace*{0.1cm}
   
   \item[(2)] $\alpha = 2$. In this second case, we have
   $$\bigg(\int_{2}^{\rho}t^{-\alpha/2}\,dt\bigg)^2
   =  \log^2(\rho/2);$$
   as a consequence, since $\beta\geq \alpha-1 = 1$, we obtain
   \begin{align*}
    & \rho^{2\beta}\cdot\bigg\{\bigg(\int_{2}^{\rho}t^{-\alpha/2}\,dt\bigg)^2\rho^{-\alpha}
    \bigg\} = \rho^{2(\beta-1)}\log^2(\rho/2)\to +\infty
    \quad\text{as $\rho\to +\infty$},
   \end{align*}
   and this implies once again the desired \eqref{eq:toproveGiiExm2r}.
  \end{itemize}
   Summing up, we have proved that the needed 
   \eqref{eq:toproveGiiExm2r} holds for some constant $\kappa_1 > 0$
   only depending on the fixed $\alpha\leq 2$, and thus estimate
   \eqref{eq:controllobLontano} is satisfied.
  \vspace*{0.1cm}
  
  As regards the validity of estimate \eqref{eq:controllobVicino}, instead,
  we proceed as follows. On the one hand, since
  $\mathbf{b}_\beta\equiv 0$ on $\{\mathcal{N}_{\mathbb{H}^m} \leq 1\}$
  and since $Q_\alpha\geq 0$ in $\R^{2m+1}$ (see properties
  (1) and (2')), we clearly have the following estimate
  \begin{equation} \label{eq:stimazero}
   0 = |\mathbf{b}_\beta(z)|^2+(\mathrm{div}_X(\mathbf{b}_\beta))_-
  \leq Q_\alpha(z)\quad \text{for every $z\in \{\mathcal{N}_{\mathbb{H}^m} \leq 1\}$};
  \end{equation}
  on the other hand, since by \eqref{eq:choicebQexm2} we have
  $$Q_\alpha(z) = (1+|\nabla_X\mathcal{N}_{\mathbb{H}^m}|^2)\,
   \mathcal{N}_{\mathbb{H}^m}^{-\alpha}(z) > 0\quad\text{on $\mathcal{O}
   =\{\mathcal{N}_{\mathbb{H}^m}>1\}$},$$
  and since $\mathbf{b}_\beta\in C^\infty(\R^{2m+1},\R^{2m})$, we
  easily derive that 
  \begin{equation} \label{eq:stimanonzero}
   |\mathbf{b}_\beta(z)|^2+(\mathrm{div}_X(\mathbf{b}_\beta))_-
  \leq \kappa_2\,Q_\alpha(z)\quad \text{for every $z\in \{1\leq\mathcal{N}_{\mathbb{H}^m} \leq 2\}$},
  \end{equation}
  for some constant $\kappa_2 > 0$. As a consequence, by combining 
  \eqref{eq:stimazero}\,-\,\eqref{eq:stimanonzero}, we derive that
  estimate \eqref{eq:controllobVicino} is satisfied
  on $\{\mathcal{N}_{\mathbb{H}^m} \leq 2\}$, as desired.
  
  Gathering all the above facts,
  we can finally conclude that
  assumption (G) is satisfied in our context with the following choice
  of $\rho_0,\,\hat{q}$ and $\kappa$:
  \begin{equation} \label{eq:choicerhoqExm2Final}
  \rho_0 := 2,\qquad
  \hat{q}(t) = t^{-\alpha}\,\,(t>2),\qquad \kappa := \max\{1,\kappa_1,\kappa_2\}.
  \end{equation}
  
  Now that we have proved the validity of the \emph{structural assumptions}
  $(S)$\,-\,$(G)$, we tu\-rn to prove the validity of
  assumption \eqref{eq:mainAssumptionIntegralCG}: in view of \eqref{eq:choicerhoqExm2Final},
  and since in this context we have $D = 2m+2$,
  we need to show that there exists $\Lambda > 0$ such that
  \begin{equation} \label{eq:toproveIntegralHmwithb}
   \int_1^{+\infty}
   \frac{1}{r^{2m+1}}\exp\bigg\{\Lambda\Big(\int_1^{r}t^{-\alpha/2}\,dt\Big)^2\bigg\}\,dr = +\infty;
  \end{equation}
   on the other hand, since we are assuming that $\alpha\leq 2$, it is easy
   to check that the above \eqref{eq:toproveIntegralHmwithb}
   is trivially satisfied by choosing $\Lambda = 1$.
   \vspace*{0.1cm}
   
   We explicitly point out that the validity of condition 
   \eqref{eq:toproveIntegralHmwithb} \emph{only depends on $\alpha$},
   since the function $\mathbf{b}_\beta$ does not play a role; this is the
   reason why we fixed $\alpha\leq 2$ from the beginning (as in Example \ref{exm:HeisenbergSenzab}),
   even with the presence of a non-zero $\mathbf{b}_\beta$.
 \end{example}
 \begin{example} \label{exm:GrushinSenzab}
 In Euclidean space $\R^2$, we consider the smooth vector fields
 $$X_1 = \de_{x_1},\qquad X_2 = x_1\de_{x_2}.$$
 Since $[X_1,X_2] = \de_{x_2}$, it is readily seen that $X_1,X_2$ satisfy
 the H\"ormander condition \emph{at every point $x\in\R^2$} (and not only at $x = 0$);
 moreover, defining
 \begin{equation} \label{eq:defdeltalambda}
  \delta_\lambda(x) = (\lambda x_1,\lambda^2 x_2)\qquad (x\in\R^2),
 \end{equation}
 a direct computation shows that $X_1,X_2$ are \emph{$\delta_\lambda$-homogeneous of degree $1$}.
 Gathering these facts, we then conclude that the set 
 $X = \{X_1,X_2\}\subseteq\mathcal{X}(\R^2)$
 satisfies \emph{both the structural As\-sump\-tions $(H1)$ and $(H2)$} of Theorem \ref{thm:MainThm}.
 \vspace*{0.1cm}
 
 Taking into account \eqref{eq:defdeltalambda}, we now consider the \emph{exhaustion function}
 \begin{equation} \label{eq:normaNGrushin}
  \mathcal{N}(x) = \big(x_1^4+x_2^2\big)^{1/4}
 \end{equation}
 (which is readily seen to be \emph{$\delta_\lambda$-homogeneous of degree $1$}),
 and we study the fol\-lowing equation, modeled on the above vector fields $X_1,X_2$,
 \begin{equation} \label{eq:GrushinPDE}
 \begin{split}
  \LL u & = (X_1^2+X_2^2)u-Q_\alpha(x) u = 0\\
   & = (\de_{x_1}^2+x_1^2\de_{x_2}^2)u
   -Q_\alpha(x) u = 0 \quad\text{in $\R^2$},
   \end{split}
 \end{equation}
 where $\alpha\in\R$ is fixed, and $Q_\alpha:\R^2\to\R$ is such that
 \vspace*{0.1cm}
  
  \begin{compactenum}
   \item $Q_\alpha\in C^\infty(\R^{2}),\,Q_\alpha\geq 0$ on $\R^{2}$;
   \vspace*{0.05cm}
   
   \item setting $\mathcal{O} = \{\mathcal{N}>1\} = \{x\in\R^2:\,x_1^4+x_2^2 > 1\}\subseteq\R^{2}$, we have
  \begin{equation*} 
   Q_\alpha(x) = \frac{|\nabla_X\mathcal{N}|^2}
   {\mathcal{N}^\alpha(x)}\quad\text{on $\mathcal{O}$}
  \end{equation*}
  (where $\mathcal{N}$ is as in \eqref{eq:normaNGrushin}).
  \end{compactenum}
  \vspace*{0.1cm}
    
     Clearly, equation \eqref{eq:GrushinPDE} takes the form
  \eqref{eq:MainPDE}, with
  \begin{equation*}
   \mathbf{b}\equiv 0\quad\text{and}\quad \text{$Q \equiv Q_\alpha$ as above}.
  \end{equation*}
  We now turn to show that \emph{Assumptions $(S)$\,-\,$(G)$}
  of Theorem \ref{thm:MainThm} are satisfied
  \emph{with respect to the exhaustion function $\mathcal{N}$
  \eqref{eq:normaNGrushin}}; hence,
  equation \eqref{eq:GrushinPDE} \emph{does not possess}
  non-trivial bounded solutions, \emph{provided that
  condition \eqref{eq:mainAssumptionIntegral} holds.} 
  
  In this perspective, we notice that  
  \begin{equation} \label{eq:nablaXnablaN}
   |\nabla_X\mathcal{N}(x)|^2 = \frac{x_1^2(4 x_1^4 + x_2^2)}{4(x_1^4 + x_2^2)^{3/2}},
   \qquad |\nabla\mathcal{N}(x)| =  
    \frac{1}{2}\sqrt{\frac{4x_1^6+x_2^2}{(x_1^4 + x_2^2)^{3/2}}}
  \end{equation}
  Moreover, we stress that
  \emph{in this context we cannot apply Theorem \ref{thm:MainCG}}:
  indeed, since we have
  $(X_2)_0 = 0\in\R^2$ but $X_2\not\equiv 0$,
   by Proposition \ref{prop.LieAlgG} we see
  that \emph{there cannot exist} a Lie group $\G$
  on $\R^2$ such that $X_1,X_2$ are left-invariant on $\G$.
  \medskip

  We then turn to prove the validity of Assumptions $(S)$ and $(G)$.
  \medskip
  
  \noindent -\,\,\textsc{Assumption $(S)$}.
  Owing to the properties (1)\,-\,(2) of $Q_\alpha$ listed above, 
  and taking into account \eqref{eq:nablaXnablaN},
   we immediately see that
  Assumption $(S)$ is satisfied.
  \medskip

  \noindent-\,\,\textsc{Assumption $(G)$}.
  First of all, owing once again to property (2) of $Q_\alpha$, we see that
  estimate \eqref{eq:assQgeq} in Assumption $(G)$-(i) is trivially satisfied with the choice
  \begin{equation*}
   \rho_0 := 1\quad\text{and}\quad
  \hat{q}(t) = t^{-\alpha}\,\,(t> \rho_0)
  \end{equation*}
  (and with respect to $\mathcal{N}$).
  Moreover, since $\mathbf{b}\equiv 0$ and $Q_\alpha \geq 0$ pointwise in $\R^{2}$,
  we immediately recognize that also
  estimate \eqref{eq:controllobLontano} in Assumption $(G)$-(i)
  and Assumption $(G)$-(ii) are satisfied
  (choosing, e.g., $\kappa = 1$).
  \medskip

  Now we have proved the validity of the \emph{structural assumptions}
  $(S)$\,-\,$(G)$, we are entitled to apply our Theorem \ref{thm:MainThm}
  to equation \eqref{eq:GrushinPDE}: \emph{the Liouville property
  \eqref{eq:LPprop} holds for such an equation, provided that there exists $\Lambda > 0$ such that}
  \begin{equation*}
   \int_1^{+\infty}
   \frac{1}{\mathcal{S}(r)}\exp\bigg\{\Lambda\Big(\int_1^{r}t^{-\alpha/2}\,dt\Big)^2\bigg\}\,dr = +\infty,
  \end{equation*}
   \emph{where $\mathcal{S}(r)$ is `explicitly' given by}
   \begin{align*}
    \mathcal{S}(r) & = \int_{\{\mathcal{N}(x) = r\}}
   \frac{|\nabla_X\mathcal{N}(x)|^2}{|\nabla\mathcal{N}(x)|}\,d\mathcal{H}^{n-1}(x) \\
   & = \frac{1}{2r^3}\int_{\{x_1^4+x_2^2 = r^4\}}\frac{x_1^2 (4 x_1^4 + x_2^2)}{\sqrt{4x_1^6+x_2^2}}
   \,d\mathcal{H}^{n-1}(x).
   \end{align*}
 \end{example}
 \section{Optimality} \label{sec:Optimality}
  The aim of this last section is to present
  a concrete setting where the uniqueness result in Theorem \ref{thm:MainThm} \emph{turns
  out to be sharp}.
  More precisely, following the notation in Example \ref{exm:HeisenbergSenzab}, we consider the equation
  \begin{equation} \label{eq:equationSharp}
   \Delta_{\mathbb{H}^m}u - Q_\alpha u = 0 \quad\text{on $\mathbb{H}^m$},
  \end{equation}
  where the smooth and non-negative potential $Q_\alpha$ satisfies 
  $$
   \mathrm{a)}\,\,Q_\alpha(z) = |\nabla_X\mathcal{N}_{\mathbb{H}^m}(z)|^2
   \mathcal{N}_{\mathbb{H}^m}^{-\alpha}(z)\qquad\text{or}\qquad
   \mathrm{b)}\,\,Q_\alpha(z) = 
   \mathcal{N}_{\mathbb{H}^m}^{-\alpha}(z),$$ 
  in the set $\mathcal{O} = \{\mathcal{N}_{\mathbb{H}^m}(z) > 1\}\subseteq\mathbb{H}^m = \R^{2m+1}$.
  
  Then,
  by combining Example \ref{exm:HeisenbergSenzab} with the results of this section
  we will see that the Liouville property
  holds for equation \eqref{eq:equationSharp}
  \emph{if and only if}
 $$\alpha\leq 2,$$
 which is {precisely the range of $\alpha$ for which condition \eqref{eq:mainAssumptionIntegral}
 holds}.
 \vspace*{0.1cm}
 
 Indeed, we have already shown
 in Example \ref{exm:HeisenbergSenzab} that the 
 Liouville property
  holds for equation \eqref{eq:equationSharp} when $\alpha\leq 2$,
  \emph{since condition \eqref{eq:mainAssumptionIntegral} is satisfied}
  (notice that this equation satisfies
  all the structural assumptions of Theorem \ref{thm:MainThm}
  in both cases a)\,-\,b), since $|\nabla_X\mathcal{N}_{\mathbb{H}^m}|^2$ is globally bounded).
  On the other hand, when $\alpha > 2$ (so that
  condition \eqref{eq:mainAssumptionIntegral} \emph{does not hold}), we have the following theorem
  showing that the Liouville property \emph{does not hold} for equation \eqref{eq:equationSharp}
  in both cases. 
  \begin{theorem} \label{thm:Optimal}
    Let $m\geq 2$, and let $Q\in C^\infty(\mathbb{H}^m),\,\text{$Q\geq 0$ on $\mathbb{H}^m$}$
    be such that
    \begin{equation} \label{eq:QleqNExistence}
     Q(z)\leq C\,\mathcal{N}_{\mathbb{H}^m}^{-\alpha}(z)
    \end{equation}
    out of some compact set $K\subseteq\mathbb{H}^m$, and for some constant $C > 0$. 
    Then, if $\alpha > 2$ there exist infinitely many \emph{distinct and bounded solutions} of equation
    \begin{equation}\label{eq:Optimal}
     \Delta_{\mathbb{H}^m}u - Q u = 0 \quad\text{on $\mathbb{H}^m$}.
    \end{equation}
    More precisely, given any $\gamma > 0$ there exists a bounded solution $u_\gamma$
    of the above e\-qua\-tion such that, for every fixed $t\in\R$, one has
    \begin{equation} \label{eq:limituNonUnique}
     u_\gamma(x,y,t)\to \gamma\quad\text{as ${|x|^2+|y|^2}\to+\infty$}.
     \end{equation}
  \end{theorem}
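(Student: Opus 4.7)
The plan is to construct, for each $\gamma > 0$, a bounded smooth solution $u_\gamma$ of \eqref{eq:Optimal} satisfying \eqref{eq:limituNonUnique}; distinct values of $\gamma$ will then produce solutions with distinct horizontal limits, giving the desired infinite family.

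First, I obtain a candidate solution by monotone exhaustion. By Lemma \ref{lem:SequenceInvading}, for each $k$ there exists a unique $u_k \in C(\overline{\mathcal{V}_k}) \cap C^\infty(\mathcal{V}_k)$ solving $\Delta_{\mathbb{H}^m} u_k - Q u_k = 0$ in $\mathcal{V}_k$ with $u_k = \gamma$ on $\partial\mathcal{V}_k$. The WMP for $\mathcal{L} := \Delta_{\mathbb{H}^m} - Q$ (available since $Q \geq 0$) gives $0 \leq u_k \leq \gamma$ and, applied to $u_{k+1}-u_k$ on $\mathcal{V}_k$, the monotonicity $u_{k+1} \leq u_k$; hence $u_\gamma(z) := \lim_k u_k(z) \in [0,\gamma]$ exists, and by Dominated Convergence together with hypoellipticity (as in the proof of Proposition \ref{prop:Solnonneg}) $u_\gamma$ is a classical $C^\infty$ solution of \eqref{eq:Optimal}.

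The crux is to show $u_\gamma$ is asymptotic to $\gamma$ at horizontal infinity, which I achieve by exhibiting a suitable sub-solution. Setting $\rho(z) := \sqrt{|x|^2+|y|^2}$ for $z = (x,y,t)$, a direct computation from the explicit form of $X_i, Y_i$ gives $|\nabla_X \rho|^2 \equiv 1$ and $\Delta_{\mathbb{H}^m} \rho = (2m-1)/\rho$ on $\{\rho>0\}$, so that $\Delta_{\mathbb{H}^m}$ acts on horizontal radial functions $F(\rho)$ exactly as the Euclidean Laplacian in dimension $2m$; in particular, for any $\beta \in (0, 2m-2)$ (non-empty because $m \geq 2$),
$$\Delta_{\mathbb{H}^m}(\rho^{-\beta}) = -\beta(2m-2-\beta)\rho^{-\beta-2} < 0.$$
Since $\alpha > 2$, fix $\beta \in (0, \min\{\alpha-2,\, 2m-2\})$, and for a constant $M>0$ to be chosen, define
$$\underline{w}(z) := \max\bigl\{0,\; \gamma - M\rho(z)^{-\beta}\bigr\}.$$
On the open set $\{\underline{w} > 0\} = \{\rho > \rho_* := (M/\gamma)^{1/\beta}\}$ one computes
$$\Delta_{\mathbb{H}^m}\underline{w} - Q\underline{w} = M\beta(2m-2-\beta)\rho^{-\beta-2} + MQ\rho^{-\beta} - \gamma Q.$$
The hypothesis \eqref{eq:QleqNExistence}, combined with the elementary bound $\mathcal{N}_{\mathbb{H}^m} \geq c\rho$ from \eqref{eq:explicitNHm}, yields $Q \leq C'\rho^{-\alpha}$ outside some compact set; since $\beta+2 < \alpha$, the second-order radial term $M\beta(2m-2-\beta)\rho^{-\beta-2}$ dominates $\gamma Q \leq \gamma C'\rho^{-\alpha}$ as $\rho \to \infty$, so by taking $M$ large enough (so that $\rho_*$ lies beyond $K$ and in the regime where this domination holds) one secures $\Delta_{\mathbb{H}^m}\underline{w} - Q\underline{w} \geq 0$ throughout $\{\underline{w} > 0\}$.

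The comparison is then standard. On the bounded set $\Omega_k := \{\rho > \rho_*\} \cap \mathcal{V}_k$ the function $u_k - \underline{w}$ is a $C^2$ super-solution of $\mathcal{L}$ with non-negative boundary values (as $\underline{w} = 0$ on $\{\rho = \rho_*\}$ and $\underline{w} \leq \gamma = u_k$ on $\partial\mathcal{V}_k$), so the WMP gives $u_k \geq \underline{w}$ on $\Omega_k$; on $\{\rho \leq \rho_*\} \cap \mathcal{V}_k$ the bound $u_k \geq 0 = \underline{w}$ is trivial. Passing to the limit $k \to \infty$ yields $\underline{w} \leq u_\gamma \leq \gamma$ on $\mathbb{H}^m$, and since $\underline{w}(z) \to \gamma$ as $\rho(z) \to \infty$ uniformly in $t$, the limit \eqref{eq:limituNonUnique} is established. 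The principal obstacle is the sub-solution itself: a naive $\mathcal{N}_{\mathbb{H}^m}$-radial ansatz fails because $|\nabla_X \mathcal{N}_{\mathbb{H}^m}|^2$ vanishes along the entire $t$-axis, and replacing $\mathcal{N}_{\mathbb{H}^m}$ by the Euclidean horizontal distance $\rho$ is precisely what circumvents this degeneracy; both hypotheses $\alpha > 2$ and $m \geq 2$ enter the argument simultaneously via the requirement $\beta \in (0, \min\{\alpha-2,\, 2m-2\})$.
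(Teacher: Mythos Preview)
Your proof is correct and follows essentially the same strategy as the paper: construct $u_\gamma$ as the monotone limit of Dirichlet solutions with boundary data $\gamma$ on an exhausting family, then pin down the horizontal limit via a barrier of the form $\rho^{-\beta}$ with $\beta\in(0,\min\{2m-2,\alpha-2\})$. The only cosmetic differences are that the paper exhausts by Euclidean cylinders $B_j\times(-j,j)$ (using Lemma~\ref{lem:Solvability}) rather than the sets $\mathcal{V}_k$ of Lemma~\ref{lem:SequenceInvading}, and packages the barrier as a supersolution $V=A\rho^{-\beta}$ of $\Delta_{\mathbb{H}^m}V\leq -Q$ (Lemma~\ref{lem:Barriera}) rather than as your subsolution $\underline{w}=\max\{0,\gamma-M\rho^{-\beta}\}$.
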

 The proof of Theorem \ref{thm:Optimal} relies on the following lemmas.
 \begin{lemma} \label{lem:Solvability}
  Let $\Omega\subseteq\mathbb{H}^m$ be a bounded open set satisfying the
  following \emph{(in\-trin\-sic) exterior ball condition}: for every $\zeta\in \de\Omega$ there exist
  $z_0\in\mathbb{H}^m,\,r > 0$ such that
  $${\{z:\,\mathcal{N}_{\mathbb{H}^m}(z_0^{-1}* z)\leq r\}}\cap\overline{\Omega} = \{\zeta\}$$
  \emph{(}here, $*$ is the group law $\mathbb{H}^m$ and $z_0^{-1}$ is the inverse of $z_0$
  with respect to $*$\emph{)}. Mo\-re\-o\-ver, let $c\in C^\infty(\mathbb{H}^m)$  be 
  a non-negative
  function.
  \vspace*{0.1cm}
  
  Then, given any $\varphi\in C(\de\Omega)$, the Dirichlet problem
  $$\begin{cases}
  \Delta_{\mathbb{H}^m}u-c(z)u = 0 & \text{in $\Omega$} \\
  u = \varphi & \text{in $\de\Omega$}
  \end{cases}$$
  has a unique solution $u\in C^\infty(\Omega)\cap 
  C(\overline{\Omega})$.
 \end{lemma}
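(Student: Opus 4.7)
The plan is to adapt the two-step scheme of Lemma \ref{lem:ExistenceOmegazero} to the Heisenberg group, substituting the non-characteristic exterior Euclidean balls required by Bony with barriers built directly from the explicit fundamental solution of $\Delta_{\mathbb{H}^m}$. Uniqueness is immediate: since $c\geq 0$, Remark \ref{rem:PropertiesValgonoGenerali} tells us that $\mathcal{P}:=\Delta_{\mathbb{H}^m}-c$ enjoys the Weak Maximum Principle (property 3 of Section \ref{sec:Preli}); applied to $\pm(u_1-u_2)$ this kills any two potential solutions agreeing on $\partial\Omega$.

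For existence I would first treat the case $c>0$ via Perron's method. The hypoellipticity of $\mathcal{P}$ (property 1 of Section \ref{sec:Preli}) forces the Perron envelope to be smooth in $\Omega$ and to solve $\mathcal{P}u=0$ there, so the only thing to check is that every boundary point is \emph{regular}. Given $\zeta\in\partial\Omega$ and the pair $(z_0,r)$ from the intrinsic exterior ball condition (so in particular $z_0\notin\overline{\Omega}$, since $0=\mathcal{N}_{\mathbb{H}^m}(z_0^{-1}*z_0)<r$), I would introduce
\[
w_\zeta(z):= r^{2-D}-\mathcal{N}_{\mathbb{H}^m}^{\,2-D}(z_0^{-1}*z),\qquad z\in\overline{\Omega}.
\]
Since $2-D<0$ and $\mathcal{N}_{\mathbb{H}^m}(z_0^{-1}*z)>r$ on $\overline{\Omega}\setminus\{\zeta\}$, one gets $w_\zeta>0$ on $\overline{\Omega}\setminus\{\zeta\}$ with $w_\zeta(\zeta)=0$; Folland's result recalled in Section \ref{sec:CarnotGroups} yields $\Delta_{\mathbb{H}^m}w_\zeta=0$ in $\Omega$, hence $\mathcal{P}w_\zeta=-c\,w_\zeta\leq 0$. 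This non-negative super-solution vanishing only at $\zeta$ is the standard barrier granting boundary regularity of the Perron envelope at $\zeta$; together with $C^\infty$-hypoellipticity inside $\Omega$ this delivers $u\in C^\infty(\Omega)\cap C(\overline{\Omega})$.

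Next I would pass from $c\geq 0$ to $c>0$ by the same multiplicative trick used in Step 2 of Lemma \ref{lem:ExistenceOmegazero}. Here I can choose $\nu_0=e_1$ globally on $\mathbb{H}^m$: the first row of $S(z)=[(X_1)_z,\dots,(X_m)_z,(Y_1)_z,\dots,(Y_m)_z]$ is $(1,0,\dots,0)$, so the principal matrix of $\Delta_{\mathbb{H}^m}$ satisfies $\langle A(z)e_1,e_1\rangle\equiv 1$ on the whole group. Setting $\psi(z):=K-e^{-\alpha z_1}$ with $\alpha,K$ sufficiently large (depending on $\|c\|_{L^\infty(\overline{\Omega})}$, finite by boundedness of $\Omega$), the conjugated operator $\mathcal{Q}u:=\psi^{-1}\mathcal{P}(\psi u)$ retains the same principal part and acquires a zero-order coefficient bounded below by a positive constant on $\overline{\Omega}$, exactly as computed in Lemma \ref{lem:ExistenceOmegazero}. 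Applying the case $c>0$ to $\mathcal{Q}$ with datum $\varphi/\psi\in C(\partial\Omega)$ produces $v$, and $u:=\psi v$ solves the original problem.

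The main obstacle is the boundary-regularity step of Perron's method: Bony's Theorem 5.2 invoked in Lemma \ref{lem:ExistenceOmegazero} requires a \emph{non-characteristic Euclidean} exterior ball, which is strictly different from the \emph{intrinsic gauge-ball} condition assumed here (the two notions disagree at characteristic points of $\partial\Omega$). The explicit barrier $w_\zeta$ is precisely the replacement: built from the fundamental solution $\mathcal{N}_{\mathbb{H}^m}^{\,2-D}$, it is automatically $\Delta_{\mathbb{H}^m}$-harmonic and hence respects the sub-Riemannian geometry, giving boundary regularity at every $\zeta\in\partial\Omega$ without any further appeal to \cite{Bony}.
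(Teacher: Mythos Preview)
Your proposal is essentially correct and, crucially, the barrier you build is \emph{exactly} the one the paper uses: since $D=2m+2$, your $w_\zeta(z)=r^{2-D}-\mathcal{N}_{\mathbb{H}^m}^{\,2-D}(z_0^{-1}*z)$ coincides with the paper's Bouligand function $B(z)=r^{-2m}-\mathcal{N}_{\mathbb{H}^m}^{-2m}(z_0^{-1}*z)$. The paper, however, does not run Perron's method by hand; it simply invokes the axiomatic potential theory of Herv\'e \cite[Sec.\,4]{Herve}, checking Axioms 1--3 via \cite[Thm.\,8.2]{Bony} and Axiom 4 via the potential $P=\mathcal{N}_{\mathbb{H}^m}^{-2m}$, after which the Bouligand criterion with $B$ above yields regularity of every boundary point in one stroke. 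That framework already covers $c\geq 0$ directly, so your conjugation step (reducing $c\geq 0$ to $c>0$) is redundant here: note that your barrier satisfies $\mathcal{P}w_\zeta=-c\,w_\zeta\leq 0$ as soon as $c\geq 0$, so the case split buys nothing. A small gap worth flagging: you assert that hypoellipticity alone makes the Perron envelope a smooth solution inside $\Omega$, but this still requires local solvability (Poisson modification on a basis of regular sets) to know the envelope is a distributional solution in the first place; that ingredient is precisely what the Bony/Herv\'e machinery supplies and should be cited rather than left implicit.
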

 \begin{proof}
  This result follows from Abstract Potential Theory, see, e.g., \cite[Sec.\,4]{Herve}.
  In particular, the validity of Axioms 1\,-\,to\,-\,3 in \cite[Sec.\,4]{Herve}
  is proved in \cite[Thm.\,8.2]{Bony}, while the validity of
  Axiom 4 follows by choosing the function
  $$P(z) = \mathcal{N}_{\mathbb{H}^m}^{-2m}(z)$$
  as a potential on $\mathbb{H}^m$. Owing to these facts, we can then apply
  the results in the cited \cite[Sec.\,4]{Herve} with the \emph{Bouligand function}
  $$B(z) = \frac{1}{r^{2m}}-P(z_0^{-1}*z).$$
  This ends the proof.
 \end{proof}
 \begin{lemma} \label{lem:Barriera}
 Let $m\geq 2,\,\alpha > 2$. Moreover, let $Q$
 be as in Theorem \ref{thm:Optimal}, and let $R_0 > 0$ 
 be such that $K\subseteq \mathcal{C}_{R_0} := B_{R_0}\times \R$
 \emph{(}here, $B_{R_0}$ denotes the usual Euclidean ball
 with centre $0$ and radius $R_0$ in $\R^{2m}$\emph{)}.
 Then, the function
 $$V(z) = V(x,y,t) =  A\,(x^2+y^2)^{-\beta/2}$$
  satisfies the following properties
  \begin{equation}  \label{eq:propertiesBarrieraV}
  \begin{split}
  \mathrm{i)}\,\,&\text{$\Delta_{\mathbb{H}^m}V\leq -Q$ on $\mathbb{H}^m\setminus \mathcal{C}_{R_0}$}; \\[0.1cm]
  \mathrm{ii)}\,\,&\text{$V > 0$ pointwise on $\mathbb{H}^m\setminus \mathcal{C}_{R_0}$}; \\[0.1cm]
  \mathrm{iii)}\,\,&\text{$V\to 0$ as ${|x|^2+|y|^2}\to +\infty$ (for any fixed $t\in\R$)},
  \end{split} 
  \end{equation}
  provided that $A > 0$ is large enough, and $0<\beta<\min\{2m-2,\alpha-2\}$.
 \end{lemma}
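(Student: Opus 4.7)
The plan is to exploit the fact that $V$ depends only on $r := \sqrt{|x|^2+|y|^2}$, so that the $\partial_t$-components of $X_i,Y_i$ are killed and the sub-Laplacian $\Delta_{\mathbb{H}^m}$ reduces, on $V$, to the ordinary radial Laplacian in $\R^{2m}$.

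First I would compute the action of $\Delta_{\mathbb{H}^m}$ on a function of $r$. Since $V = V(r)$ is independent of $t$, a direct calculation gives
\[
X_i V = V'(r)\,\frac{x_i}{r},\qquad Y_i V = V'(r)\,\frac{y_i}{r},
\]
and iterating (again using that $V$ is $t$-independent, so the $\partial_t$-terms in $X_i,Y_i$ produce nothing),
\[
\Delta_{\mathbb{H}^m}V \;=\; V''(r)+\frac{2m-1}{r}\,V'(r).
\]
This is precisely the radial Laplacian in dimension $2m$.

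Next I would specialize to $V(r)=A\,r^{-\beta}$ and obtain, by elementary differentiation,
\[
\Delta_{\mathbb{H}^m}V(z) \;=\; -A\,\beta\,(2m-2-\beta)\, r^{-\beta-2}.
\]
Since $\beta<2m-2$ by assumption, the constant $A\beta(2m-2-\beta)$ is strictly positive, hence $\Delta_{\mathbb{H}^m}V<0$ on $\mathbb{H}^m\setminus\mathcal{C}_{R_0}$. To upgrade this to $\Delta_{\mathbb{H}^m}V\le -Q$, I would exploit the explicit form \eqref{eq:explicitNHm} of the gauge, which yields
\[
\mathcal{N}_{\mathbb{H}^m}(x,y,t)\;\geq\; \mathbf{c}_m\,\sqrt{|x|^2+|y|^2}\;=\;\mathbf{c}_m\,r,
\]
and hence, from \eqref{eq:QleqNExistence} and the inclusion $K\subseteq\mathcal{C}_{R_0}$,
\[
Q(z)\;\leq\; C\,\mathcal{N}_{\mathbb{H}^m}^{-\alpha}(z)\;\leq\; C\,\mathbf{c}_m^{-\alpha}\, r^{-\alpha}\qquad\text{on }\mathbb{H}^m\setminus\mathcal{C}_{R_0}.
\]
Combining these, $\Delta_{\mathbb{H}^m}V\le -Q$ on $\mathbb{H}^m\setminus\mathcal{C}_{R_0}$ reduces to
\[
A\,\beta\,(2m-2-\beta)\,r^{-\beta-2}\;\geq\; C\,\mathbf{c}_m^{-\alpha}\,r^{-\alpha}\qquad\text{for all } r\ge R_0,
\]
i.e.\ to $A\,\beta\,(2m-2-\beta)\;\geq\; C\,\mathbf{c}_m^{-\alpha}\, r^{\,\beta+2-\alpha}$. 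Since $\beta<\alpha-2$, the exponent $\beta+2-\alpha$ is negative, so the right-hand side is maximized at $r=R_0$; choosing $A$ sufficiently large (depending on $C,m,\alpha,\beta,R_0$) settles i).

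Finally, ii) and iii) are immediate: for points outside $\mathcal{C}_{R_0}$ one has $r>R_0>0$, so $V(z)=A\,r^{-\beta}>0$, and clearly $V\to 0$ as $|x|^2+|y|^2\to\infty$ for any fixed $t\in\R$. No step really presents an obstacle: the only mild subtlety is verifying that $V$ being $t$-independent kills the drift parts of the $X_i,Y_i$, so that the sub-Laplacian collapses to the Euclidean radial operator in $2m$ variables, which is why the condition $m\ge 2$ (ensuring $2m-2>0$) is essential.
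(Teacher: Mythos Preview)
Your argument is correct and is exactly the ``direct computation'' the paper alludes to but omits: reducing $\Delta_{\mathbb{H}^m}$ to the Euclidean radial Laplacian on $t$-independent functions, computing $\Delta_{\mathbb{H}^m}V=-A\beta(2m-2-\beta)r^{-\beta-2}$, and then using $\mathcal{N}_{\mathbb{H}^m}\ge \mathbf{c}_m r$ together with \eqref{eq:QleqNExistence} and $\beta<\alpha-2$ to absorb $Q$ by choosing $A$ large. There is nothing to add.
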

 \begin{proof}
  It is a direct computation, which crucially
  exploits estimate
  \eqref{eq:QleqNExistence} and the as\-sumption
  $\alpha > 2$. We omit any further detail.
 \end{proof}
 Thanks to Lemmas \ref{lem:Solvability}\,-\,\ref{lem:Barriera}, we can now prove Theorem \ref{thm:Optimal}.
  \begin{proof}[Proof (of Theorem \ref{thm:Optimal})]
 We split the proof
into two steps.
\medskip

\textsc{Step I).} In this first step we construct, for every fixed
$\gamma > 0$, a \emph{bounded function} 
$u_\gamma\in C^\infty(\mathbb{H}^m)$ 
which solves \eqref{eq:Optimal}.
To this end we consider, for every fixed $j\in\mathbb{N}$,
the following Dirichlet problem for the operator $L = \Delta_{\mathbb{H}^m}-Q(x)$
\begin{equation}\label{eq711}
\begin{cases}
L u=0 & \text{in}\,\,\Omega_j = B_j\times(-j,j)\subseteq\mathbb{H}^m\\
u = \gamma & \text{in}\,\, \de\Omega_j \\
\end{cases}
\end{equation}
(here, $B_j$ denotes the usual Euclidean ball in $\R^{2m}$ with centre $0$
and radius $j$).

Owing to the assumptions on $Q(z)$, and since the cylinder $\Omega_j$
satisfies the \emph{intrin\-sic exterior ball condition}
in Lemma \ref{lem:Solvability} (as it is convex), we know from the cited
Lemma \ref{lem:Solvability} that problem \eqref{eq711} has a unique solution $u_j\in C^\infty(\Omega_j)
\cap C(\overline{\Omega}_j)$.

We then claim that the following facts hold.
\begin{itemize}
\item[(1)] For every $j\in\mathbb{N}$ we have
\begin{equation}\label{eq712}
0\le u_j(z)\le \gamma\quad \text{for any}\,\,\,z\in \overline{\Omega}_j.
\end{equation}
\item[(2)] The sequence $\{u_j\}_j$ is \emph{decreasing}.
\end{itemize}

\noindent -\,\,\emph{Proof of Claim} (1). On the one hand, 
since $u_j \in C^\infty(\Omega_j)
\cap C(\overline{\Omega}_j)$ satisfies
\eqref{eq711} and since $\gamma> 0$,
from the Weak Maximum Principle for $L$ (see Remark \ref{rem:PropertiesValgonoGenerali})
we immediately derive that
$u_j(z) \geq 0$ for every $z\in\overline{\Omega}_j$.

On the other hand, setting $w_j = u_j-\gamma
\in C^\infty(\Omega_j)
\cap C(\overline{\Omega}_j)$, again by
exploiting the fact that
$u_j$ solves \eqref{eq711} (and since $Q\geq 0$) we derive that
\begin{itemize}
 \item $Lw_j = Q\gamma\geq 0$ on $\Omega_j$;
 \vspace*{0.05cm}
 
 \item $w_j(\zeta) = u_j(\zeta)-\gamma = 0$ for every $\zeta\in\de\Omega_j$.
\end{itemize}
Gathering these facts, we can exploit once again
the
Weak Maximum Principle for $L$, obtaining $w_j = u_j-\gamma\leq 0$ on $\overline{\Omega}_j$.
Hence, Claim (1) is proved.
\vspace*{0.1cm}

\noindent-\,\,\emph{Proof of Claim} (2). We apply once again the Weak Maximum Principle
for $L$. First of all, since 
$u_j\in C^\infty(\Omega_j)
\cap C(\overline{\Omega}_j)$ solves problem \eqref{eq711}, setting 
$$w_j = u_{j+1}-u_j\in C^\infty(\Omega_j)
\cap C(\overline{\Omega}_j)$$ 
 we  have (notice that, by definition, $\Omega_j\subseteq\overline{\Omega}_j
\subseteq\Omega_{j+1}$)
$$
L w_j(z) = 0\quad\text{for every $z\in\Omega_j$}.
$$
Moreover, on account of \eqref{eq712} we also get
$$\text{$w_j(\zeta) = u_{j+1}(\zeta)-u_j(\zeta) = u_{j+1}(\zeta)-\gamma\leq 0$ 
for all $\zeta\in \de\Omega_j\subseteq\Omega_{j+1}$}.
 $$
Therefore, by the Weak Maximum Principle we conclude that $w_j\leq 0$ in $\overline{\Omega}_j$ 
and, in particular, for every $j\in\mathbb{N}$ we conclude that
\begin{equation*}
u_{j+1}\leq u_j\quad \text{in}\,\,\,\overline{\Omega}_j.
\end{equation*}
This completes the proof of Claim (2).
\vspace*{0.1cm}

Now, by combining Claim (1) and Claim (2) we deduce that the sequence $\{u_j\}_{j\in\mathbb{N}}$ is \emph{decreasing and bounded} on $\overline{\Omega}_j$; this, together with the fact that 
$$\textstyle\bigcup_{j\geq 1}\Omega_j = \mathbb{H}^m,$$ 
ensures that there exists 
 $u_\gamma:\mathbb{H}^m\to\R$ such that
\begin{equation} \label{eq:propugammaBd}
 \begin{split}
 \mathrm{i)}\,\,&\text{$u_\gamma(z) = \lim_{j\to+\infty}u_j(z)$ for every $z\in\mathbb{H}^m$} \\[0.1cm]
 \mathrm{ii)}\,\,&\text{$0\leq u_\gamma(z)\leq \gamma$ for every $z\in\mathbb{H}^m$}.
 \end{split}
\end{equation}
We then turn to show that $u_\gamma\in C^\infty(\mathbb{H}^m)$ and that $u_\gamma$ solves
equation \eqref{eq:Optimal}.

To this end we first observe that, by construction, $u_\gamma\in L^\infty(\mathbb{H}^m)$
is a \emph{distributio\-nal solution} of \eqref{eq:Optimal}:
in\-deed, given any test function $\varphi\in C_0^\infty(\mathbb{H}^m)$, if $j$ is sufficien\-tly large
(so that $\Omega_j\supseteq \mathrm{supp}(\varphi)$) we have
$$0 = \int_{\mathbb{H}^m}(Lu_j)\varphi\,dz = \int_{\mathbb{H}^m}u_j(\Delta_{\mathbb{H}^m}\varphi-
Q(z)\varphi)\,dz,$$
since $u_j\in C^\infty(\Omega_j)\cap C(\overline{\Omega}_j)$ is a 
(classical) solution
of \eqref{eq711}; from this, by a standard do\-mi\-nated\,-\,convergence argument
based on property ii) we get
$$0 = \lim_{j\to+\infty} 
\int_{\mathbb{H}^m}u_j(\Delta_{\mathbb{H}^m}\varphi-
Q(z)\varphi)\,dz = 
\int_{\mathbb{H}^m}u_\gamma(\Delta_{\mathbb{H}^m}\varphi-
Q(z)\varphi)\,dz,$$
and this proves that $Lu_\gamma = 0$ in $\mathcal{D}'(\mathbb{H}^m)$ (by the arbitrariness of $\varphi$).

On account of this fact, 
and since $L$ is $C^\infty$\,-\,hypoelliptic (by H\"ormander's theo\-rem,
see Remark \ref{rem:PropertiesValgonoGenerali}), we
thus conclude that $u_\gamma\in C^\infty(\mathbb{H}^m)$ (up to modifying such a 
function on a set with zero Lebesgue measure), and therefore
$$Lu_\gamma = 0\quad\text{pointwise on $\mathbb{H}^m$}.$$

\textsc{Step II).} In this second step we prove that
 the function $u_\gamma\in C^\infty(\mathbb{H}^m)$, which we
know to be a solution of equation \eqref{eq:Optimal}, satisfies also
\eqref{eq:limituNonUnique}. 

To this end we  de\-fi\-ne, for every $j\in\mathbb{N},\,j > R_0$,
$$w_j(z) = u_j(z) - \gamma+\delta V(z)\in C^\infty(\Omega_j\setminus\mathcal{C}_{R_0})$$ 
(where $u_j\in C^\infty(\Omega_j)\cap C(\overline{\Omega}_j)$ is the
unique solution of the Di\-ri\-chlet problem \eqref{eq711},
while $V$ and $R_0$ are as in Lemma \ref{lem:Barriera}); then, we claim that 
\begin{equation} \label{eq:wleqzeroBarriera}
\text{$w_j\geq 0$ pointwise on $\Omega_j\setminus \mathcal{C}_{R_0}$}, 
\end{equation}
provided that the constant $\delta > 0$ is properly chosen. 

To prove this claim, it suffices to apply the Weak Maximum Principle
for $L$ to the fun\-ction $w_j$
on $\Omega_j\setminus\mathcal{C}_{R_0}$. Indeed, owing to \eqref{eq:propertiesBarrieraV} 
(and recalling that $u_j$ solves
problem \eqref{eq711}), we have the following computations:
\begin{align*}
 \mathrm{i)}\,\,&L w_j(z) = Q(z)\gamma+\delta LV(z) \leq Q(z)(\gamma-\delta)
 \quad\text{for all $z\in\Omega_j\setminus\mathcal{C}_{R_0};$} \\
 \mathrm{ii)}\,\,&w_j = \delta V\geq 0
 \quad\text{on $\zeta\in \big(\de B_j\times[-j,j]\big)\cup \big((\overline{B}_j
 \setminus B_{R_0})\times \{\pm j\}\big)\subseteq \de\Omega_j$}; \\
 \mathrm{iii)}\,\,&w_j(\zeta)\geq -\gamma+\delta V(R_0)\quad\text{for all $\zeta
 \in \de B_{R_0}\times [-j,j]$}.
\end{align*}
 We explicitly notice that, in point iii), we have also used \eqref{eq712}.
 \vspace*{0.1cm}
 
 In view of these facts, if we choose $\delta > 0$ in such a way that
 $$1)\,\,\delta\geq \gamma,\qquad 2)\,\,\delta\geq \gamma/V(R_0),$$
 we can apply the Weak Maximum Principle
 for $L$, thus obtaining \eqref{eq:wleqzeroBarriera}.
 \medskip
 
 Now we have established  \eqref{eq:wleqzeroBarriera}, we can easily conclude the proof
 of \eqref{eq:limituNonUnique}. Indeed, owing to the cited  \eqref{eq:wleqzeroBarriera}, and letting
 $j\to+\infty$, we derive that 
 $$u_\gamma(z) = \lim_{j\to+\infty}u_j(z) \geq\gamma-\delta V(z)\quad
 \text{for all $z\in \mathbb{H}^m\setminus\mathcal{C}_{R_0}$}.$$
 From this, since we have already recognized that $u_\gamma\leq \gamma$ on $\mathbb{H}^m$
 (see \eqref{eq:propugammaBd}\,-\,ii)), by letting ${|x|^2+|y|^2}\to+\infty$ with the help
 of \eqref{eq:propertiesBarrieraV} we conclude that
 $$\text{$u_\gamma(x,y,t_0)\to \gamma$ as ${|x|^2+|y|^2}\to+\infty$}.$$
 This ends the proof.
\end{proof}
\begin{remark} \label{rem:CasoRadiale}
 In the particular case of equation \eqref{eq:Optimal} with
 $$Q_\alpha(z) \leq |\nabla_X\mathcal{N}_{\mathbb{H}^m}(z)|^2
   \mathcal{N}_{\mathbb{H}^m}^{-\alpha}(z),$$
   one can demonstrate that the Liouville property holds \emph{if and only if} $\alpha\leq 2$
   also when $m = 1$ (so that $D = 4$). The argument is very similar to the one detailed above,
   the unique difference being the need to use a barrier of the form
   $$V(z) = A\,\mathcal{N}_{\mathbb{H}^m}^{-\beta}(z)$$
    (in place of the one constructed
    in Lemma \ref{lem:Barriera}),
   with $A$ sufficiently large and 
   $$0<\beta < \min\{\alpha-2,2\},$$ provided
   that $\alpha > 2$. In such a case, for every $\gamma > 0$
   one can construct a solution $u_\gamma$ of \eqref{eq:Optimal} such that
   $$\text{$u(z)\to\gamma$ as $\mathcal{N}_{\mathbb{H}^m}(z)\to+\infty$}.$$
   We leave the details to the interested reader.
\end{remark}
\appendix
 \section{A brief review on Carnot groups}  \label{sec:Appendix}
 In this appendix we review
 the basic definitions and facts on homogeneous
 Carnot groups which have been used in the paper.
 We follow the e\-xpo\-si\-tion in \cite{BLUlibro}, to which
 we refer for all the omitted proofs and the details. \medskip

 To begin with, we recall that $\G = (\R^n,*)$
 is a \emph{Lie group} (on $\R^n)$ if 
 \vspace*{0.1cm}
 
 \begin{compactenum}
  \item $(\R^n,*)$ is an algebraic group, with neutral element $e$;
  \vspace*{0.05cm}
  
  \item the map $*$ is smooth on $\R^n\times\R^n$.
 \end{compactenum} 
 \vspace*{0.1cm}
 
 \noindent For every fixed $x\in\G\equiv\R^n$ we denote, respectively,
 by $\tau_\alpha$ and $\rho_\alpha$ the left-tran\-slation
 and the right-translation by $x$, on $\G$, that is,
 \vspace*{0.1cm}
 
 \begin{compactenum}
  \item[(a)] $\tau_x:\R^n\to\R^n, \qquad\tau_x(y) := x*y$; 
  \vspace*{0.05cm}
  
  \item[(b)] $\rho_x:\R^n\to\R^n, \qquad\rho_x(y) := y*x$.
 \end{compactenum}
 \vspace*{0.1cm}

 \noindent 
 Given a Lie group $\G = (\R^n,*)$ and 
 a smooth vector field $Z$
 on $\R^n$,
 we say that $Z$ is
 \emph{left-invariant on $\G$} if we have
 \begin{equation} \label{eq.defXleftinv}
  Z\big(u\circ\tau_\alpha) = (Zu)\circ\tau_\alpha,
 \end{equation}
 for every $u\in C^\infty(\R^n)$ and every fixed $\alpha\in\R^n$.
 We denote by $\LieG$ the set of the left\--in\-va\-riant vector fields
 on $\G$ and we call it the \emph{Lie algebra of $\G$}. 
 
 Owing to identity \eqref{eq.defXleftinv}, it is quite easy
 to prove the following result.
 \begin{proposition} \label{prop.LieAlgG}
  Let $\G = (\RN,*)$ be a Lie group with neutral
  element $e$,
  and let $\LieG$ be the Lie algebra of $\G$. 
  Then the following facts hold.
  \vspace*{0.1cm}
  
  \begin{compactenum}[(1)]
   \item $\LieG$ is a Lie algebra and $\dim_{\R}(\LieG) = n$;
   \vspace*{0.05cm}
   
   \item for every $i \in \{1,\ldots,N\}$ 
   there exists precisely
   one $J_i\in \LieG$ such that
   $$J_iu(e) = \frac{\de u}{\de x_i}(e) \qquad\big(\text{for every
   $u\in C^\infty(\R^n)$}\big);$$
   more precisely, we have
   $$J_i = \sum_{j = 1}^N a_{ij}(x)\,\frac{\de }{\de x_j},$$
   where $\mathcal{J}_{\tau_x}$ denotes the Jacobian
   matrix of $\tau_x$ at $y = e$, and 
   $(a_{i1}(x),\ldots, a_{iN}(x))^T$ is the $i$-th column
   of the matrix 
   $\mathcal{J}_{\tau_x}(e)$.
   \vspace*{0.05cm}
   
   \item If $J_1,\ldots,J_N$ are as above, then $\mathcal{J}
   = \{J_1,\ldots,J_N\}$ is a 
   {(}linear{)} basis of
   $\LieG$, which is usually called the
   \emph{Jacobian basis of $\LieG$}.
  \end{compactenum}
 \end{proposition}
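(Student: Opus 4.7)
The heart of the matter is to establish a linear bijection between $\LieG$ and the tangent space at the identity, given by evaluation $Z\mapsto Z_e$. All three claims will follow from this correspondence together with a direct coordinate computation of the vector field attached to each coordinate direction at $e$.

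For (1), I would first note that $\LieG$ is closed under $\R$-linear combinations (immediate from the linearity of \eqref{eq.defXleftinv}) and then verify closure under the commutator bracket $[X,Y]=XY-YX$. The latter amounts to checking that if $X$ and $Y$ satisfy \eqref{eq.defXleftinv}, then so does $XY$: indeed, $XY(u\circ\tau_\alpha)=X((Yu)\circ\tau_\alpha)=(XYu)\circ\tau_\alpha$ by applying \eqref{eq.defXleftinv} twice, and subtracting the analogous identity for $YX$ yields left-invariance of $[X,Y]$. The Jacobi identity is inherited from the ambient Lie algebra $\mathcal{X}(\R^n)$.

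For (2), the crucial observation is that a left-invariant field is completely determined by its value at $e$: evaluating \eqref{eq.defXleftinv} at $y=e$ gives $Z_\alpha u = Z_e(u\circ\tau_\alpha)$ for every $u\in C^\infty(\R^n)$ and every $\alpha\in\R^n$, which immediately yields uniqueness. For existence, given a tangent vector $v$ at $e$, I would define $Z$ by $Z_xu:=v(u\circ\tau_x)$, then verify smoothness (from smoothness of the group law) and left-invariance (from the relation $\tau_\alpha\circ\tau_x=\tau_{\alpha* x}$, which is associativity of $*$). Specializing $v$ to be $\de/\de x_i|_e$ and applying the chain rule to $u(\tau_x(y))$ at $y=e$ produces precisely the announced expression $J_i=\sum_{j=1}^N a_{ij}(x)\,\de/\de x_j$, with coefficients read off from $\mathcal{J}_{\tau_x}(e)$.

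Finally, (3) is essentially a corollary: linear independence of $\{J_1,\ldots,J_N\}$ follows because $(J_i)_e=\de/\de x_i|_e$ and these are linearly independent in $T_e\R^n$; conversely, any $Z\in\LieG$ satisfies $Z_e=\sum_i c_i\,\de/\de x_i|_e$ for some scalars $c_i$, so $Z$ and $\sum_i c_i J_i$ agree at $e$, hence everywhere by the uniqueness part of (2). This also gives $\dim_\R\LieG=n$, completing the dimension statement in (1). The only genuinely technical point that needs care is the smoothness of the vector field constructed in (2); everything else reduces to a careful application of \eqref{eq.defXleftinv} and the chain rule.
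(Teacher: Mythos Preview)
Your argument is correct and follows the standard route: establish the linear isomorphism $\LieG\to T_e\R^n$, $Z\mapsto Z_e$, via \eqref{eq.defXleftinv}, then read off (1)--(3) from it together with the chain rule. The paper itself does not give a proof of this proposition; it merely remarks that the result is ``quite easy'' from identity \eqref{eq.defXleftinv} and defers all details to \cite{BLUlibro}, so there is nothing to compare beyond noting that your approach is exactly the one the paper is hinting at.
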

 Let now $\G = (\R^n,*)$ be a Lie group on $\R^n$, and
 let us assume that there exist
 \emph{natural numbers $1 = \sigma_1\leq\sigma_2\leq\ldots\leq \sigma_n$} such that
 \begin{equation} \label{eq.formDlambda}
  \delta_\lambda:\R^n\to\R^n,\qquad \delta_\lambda(x) :=
 (\lambda^{\sigma_1}x_1,\ldots,\lambda^{\sigma_n}x_n),
 \end{equation}
 is an automorphism of $\G$ for every
 fixed $\lambda > 0$, that is,
 $$\delta_\lambda(x*y) = \delta_\lambda(x)*\delta_\lambda(y), \qquad\text{for every
 $x,y\in\R^n$}.$$
 Then, the triplet
 $\G = (\R^n,*,\delta_\lambda)$ is called a \emph{homogeneous
 (Lie) group} (on $\R^n$), and
  $\{\delta_\lambda\}_{\lambda > 0}$ is usually
 referred to as the \emph{family of dilations}
 of $\G$. The number
 $$\textstyle D = \sum_{i = 1}^n \sigma_i\geq n$$
 is called the \emph{$\delta_\lambda$-homogeneous dimension} (of $\G$).
 \medskip
 
 Let $\G = (\R^n,*,\delta_\lambda)$ be a homogeneous group 
 on $\R^n$
  (with $\delta_\lambda$ as in \eqref{eq.formDlambda})
 and let $\mathcal{J}
 = \{J_1,\ldots,J_N\}$
 be the Jacobian basis of $\LieG$. It is very
 easy to recognize that $J_i$
 is $\delta_\lambda$-homogeneous of degree $\sigma_i$
 (for every fixed $i = 1,\ldots,n$), that is,
 \begin{equation*}
 J_i(u\circ \delta_\lambda) = \lambda^{\sigma_i}\,(J_i u)\circ \delta_\lambda,
 \end{equation*}
 for every $u\in C^\infty(\R^n,\R)$ and every
 $\lambda > 0$; we then define the space
 \begin{equation*}
 V_1(\G) := \mathrm{span}\big\{J_i:\,\sigma_i = 1\big\}\subseteq
 \LieG,
 \end{equation*}
 and we call such a space the  \emph{horizontal layer}
 of $\G$.
 \begin{remark} \label{rem.onV1Gdausare}
  Let $\G = (\R^n,*,\delta_\lambda)$ be a homogeneous group and let
  $V_1(\G)$ be the horizontal layer of $\G$. Since
  $V_1(\G)$ is spanned by vector fields which are
  $\delta_\lambda$-ho\-mo\-ge\-neous of degree $1$
  it is straightforward to recognize that
  $$X\in V_1(\G)\,\,\Longrightarrow\,\,
  \text{$X$ is $\delta_\lambda$-homogeneous
  of degree $1$}.$$
 \end{remark}
 If $\G = (\R^n,*,\delta_\lambda)$ is a homogeneous
 Lie group on $\R^n$ and if
 \begin{equation} \label{eq:condCGroup}
  \mathrm{Lie}\big(V_1(\G)\big) = \LieG,
 \end{equation}
 we say that $\G$ is a (homogeneous) \emph{Carnot group}
 (here, by $\mathrm{Lie}(V_1(\G))$ we mean the smallest Lie sub-algebra
 of $\LieG$ containing $V_1(\G)$).
 The number
 $$m = m(\G) := \dim_{\R}\big(V_1(\G)\big)
 = \mathrm{card}\big\{i\in\{1,\ldots,N\}:\,\sigma_i = 1
 \big\},$$
 is usually referred to as the \emph{number of generators}
 of $\G$. 
 \vspace*{0.1cm}
 
 Let now $X = \{X_1,\ldots,X_m\}$ be \emph{any fixed} (linear) basis of $V_1(\G)$.
 It is not difficult to recognize that the Lie-generating condition
 \eqref{eq:condCGroup}, jointly with the fact that $\mathrm{dim}(\LieG) = n$, ensures that
 \emph{$X_1,\ldots,X_m$ satisfy H\"ormander's condition at every point $x\in\R^n$};
 thus, since the $X_i$'s are also \emph{$\delta_\lambda$-ho\-mo\-ge\-neous of degree $1$}
 (as they belong to $V_1(\G)$, see Remark \ref{rem.onV1Gdausare}), we conclude that
 $$\text{\emph{$X_1,\ldots,X_m$ satisfy Assumptions $(H1)$ and $(H2)$ in the Introduction}}.$$
 The second-order
 differential operator
 $$\textstyle\Delta_\G := \sum_{j = 1}^m X_j^2,$$
 is called a \emph{sub-Laplacian} on $\G$.

\end{document}